\newtheorem{theorem}{Theorem}[section]
\newtheorem{definition}[theorem]{Definition}
\newtheorem{lemma}[theorem]{Lemma}
\newtheorem{rem}[theorem]{Remark}
\newtheorem{prop}[theorem]{Proposition}
\newtheorem{cor}[theorem]{Corollary}
\numberwithin{equation}{section}
  \def \G{\Gamma}
\newcommand\norm[1]{\lVert#1\rVert}
\newcommand\abs[1]{\lvert#1\rvert}
\newcommand\RR{\ensuremath{\mathbb{R}}}
\newcommand\eps{\varepsilon}
\def\tint{\text{Int}}
\newcommand{\ucite}[1]{\cite{#1}}
\begin{document}
\title{Generical behavior of flows strongly monotone with respect to high-rank cones}

\author{Lirui Feng\thanks{Supported by the NSERC and the NSERC-IRC Program.}\\
Department of Mathematics and Statistics\\
 York University, Toronto, ON, M3J 1P3, Canada
 \\
 \\
Yi Wang\thanks{Partially supported by NSF of China No.11825106, 11771414, Wu Wen-Tsun Key Laboratory and the Fundamental
Research Funds for the Central Universities.} \\
School of Mathematical Science\\
 University of Science and Technology of China
\\ Hefei, Anhui, 230026, P. R. China
\\
\\
Jianhong Wu\thanks{ Supported by the Canada Research Chairs and the NSERC-IRC Program and the Natural Science and Engineering Research Council of Canada 105588-2011.}\\
 Department of Mathematics and Statistics
 \\ York University, Toronto, ON, M3J 1P3, Canada
  \\
}

\date{}
\maketitle
\indent {\bf Abstract:} We consider a $C^{1,\alpha}$ smooth flow in $\mathbb{R}^n$ which is ``strongly monotone" with respect to a cone $C$ of rank $k$, a closed set that contains a linear subspace of dimension $k$ and no linear subspaces of higher dimension. We prove that orbits with initial data from an open and dense subset of the phase space are either pseudo-ordered or convergent to equilibria. This covers the celebrated Hirsch's Generic Convergence Theorem in the case $k=1$, yields a generic Poincar\'{e}-Bendixson Theorem for the case $k=2$, and holds true with arbitrary dimension $k$. Our approach involves the ergodic argument using the $k$-exponential separation and the associated $k$-Lyapunov exponent (that reduces to the first Lyapunov exponent if $k=1$.)

\section{Introduction}

We are interested in the global dynamics of a semiflow $\Phi_t$ ``monotone with respect to a cone" $C$ of rank-$k$ on a Banach space $X$. Here, a {\it cone $C$ of rank $k$} is a closed subset of $X$ that contains a linear subspace of dimension $k$ and no linear subspaces of higher dimension. It is not a cone defined normally in the literature, but we adopt this definition in honor of the pioneering work of Fusco and Oliva \cite{FO1} in a finite-dimensional space, and of Krasnoselskii et al. \cite{KLS} in a Banach space $X$.

A convex cone $K$ (defined in the nomal sense) does give rise to a cone of rank $1$, $K\cup(-K)$. Therefore, the class of semiflows we consider includes the order-preserving (monotone) semiflows intensively studied since the series of groundbreaking work of Hirsch \ucite{Hir1,Hir2,Hir3,Hir4,Hir5,Hir6} and Matano \cite{Mata1,Mata2}; see also monographs or surveys \cite{Hir-Smi,Smi95,Po1,Smi17} for more details. There is however an essential difference between a convex cone $K$ and a high-rank cone due to the lack of convexity in the latter. The lack of convexity requires substantially new techniques for exploring the implication of monotonicity for the global and/or generic dynamics of the considered semiflows.

An important example of a semiflow monotone with respect to a cone of high-rank is the monotone cyclic feedback system (for example, \cite{M-PSmi90,M-PSe96-2,Ge}) arising from a wide range of cellular, neural and physiological control systems. A special class of a monotone cyclic feedback system with positive feedback generals a semiflow that is monotone in the sense of Hirsch (see, for example, \cite{M-PSmi90}). However, a large class of monotone cyclic feedback systems are with negative feedback, such systems generate semiflows which are monotone with respect to a sequence of nested cones of even-ranks (see, e.g. \cite{LW1,Te1}). Negative feedbacks embedded in a cyclic architecture are believed to be the underling source of oscillatory dynamic patterns. Indeed, a Poincar\'{e}-Bendixson theorem has been established for many monotone cyclic feedback systems with negative feedback even when time lags are involved in the feedback \cite{M-PSe96,M-PSe96-2,M-PN}. Other models arising from important applications, to which our generic convergence theory can be applied, include high dimensional competitive systems (see, for example, \cite{Ba,Hir3,Hir-Smi,Mi1,JMW}); systems with quadratic cones and Lyapunov-like functions (see, for example, \cite{SmithR-2,SmithR-3,OSan}); as well as systems with integer-valued Lyapunov functionals such as scalar parabolic equations on an circle (see, for example, \cite{Fied89,FM89,JR,SWZ,SanF,Te1}) and tridiagonal systems (see, for example, \cite{FGW13,MSon,Smillie,Smith91}).

For the sake of easy reference, in what follows, we call a semiflow, that is monotone with respect to a convex cone $K$ (that induces a 1-cone $K\cup-K$), a {\it classical} monotone semiflow. The core to the huge success of developing the theory and applications of global dynamics for a classical monotone semiflow is the Generic Convergence Theorem, due to Hirsch. The Hirsch's generic convergence theorem concludes that the set of all $x\in X$, for which the omega-limit set $\omega(x)$ of $x$ satisfies $\omega(x)\subset E$ (the set of equilibria), is generic (open-dense, residual) in $X$. Subsequent studies further establish that for a classical smooth strongly monotone systems, precompact semi-orbits are generically convergent to equilibria in the continuous-time case \cite{Smi-Thi1,Pola1,Pola2} or to cycles in the discrete-time case  \cite{PT1,HP,Te2}.

The Generic Convergence Theorem of Hirsch and its extensions are based on an observation that for a classical strongly monotone system, there are exactly two different kinds of nontrival orbits: {\it pseudo-ordered orbits} and {\it unordered orbits}. Here, a nontrivial orbit:  pseudo-ordered orbits and unordered orbits. Here, a nontrivial orbit $O(x):=\{\Phi_t(x) :t \ge 0\}$ is pseudo-ordered if $O(x)$ possesses one pair of distinct ordered-points $\Phi_{t}(x)$ and $\Phi_{s}(x)$ (i.e.,  $\Phi_{t}(x)-\Phi_{s}(x)\in K\cup (-K)$); otherwise, it is called unordered (see Definition \ref{D:two-type-orbit} with $C=K\cup (-K)$).

The fundamental result in the classical strongly monotone semiflow theory, the {\it Monotone Convergence Criterion}, asserts that every pseudo-ordered precompact orbit converges monotonically to equilibria (see, for example, \cite[Theorem 1.2.1]{Smi95}). Further developments from this Monotone Convergence Criterion including the Nonordering of Limit Sets and Limit Set Dichotomy (See, for example, \cite{Hir0,Smi95,Hir-Smi}), provide the critical technical tools used to establish the Generic Convergence Theorem.

In comparison, the structure of an $\omega$-limit set of a pseudo-ordered semi-orbit for a semiflow strongly monotone with respect to a $k(\ge 2)$-cone $C$ is much more complicated, due to the lack of convexity. Sanchez \cite{San09,San10} addressed this problem and showed that the closure of any orbit in the $\omega$-limit set of a pseudo-ordered orbit is ordered with respect to $C$. In particular, Sanchez \cite{San09} obtained a Poincar\'{e}-Bendixson type result for pseudo-ordered orbits when $k=2$: the $\omega$-limit set of a pseudo-ordered orbit containing no equilibria is a closed orbit. The work \cite{San09,San10} for smooth finite dimensional flows, for which the $C^1$-Closing Lemma was required, was recently extended in Feng, Wang and Wu \cite{F-W-W} to a semiflow on a Banach space without the smoothness requirement. It was showed in \cite{F-W-W} that the $\omega$-limit set $\omega(x)$ of a pseudo-ordered semi-orbit admits a trichotomy, i.e., $\omega(x)$ is either ordered; or $\omega(x)\subset E$; or otherwise, $\omega(x)$ possesses a certain ordered homoclinic property. Given the possibility of an ordered homoclinic structure in the $\omega$-limit set of an pseudo-ordered semi-orbit, we anticipate a number of difficulties one has to face to establish an extension of the Hirsch's Generic Convergence Theorem for flows monotone with respect to $k (\geq 2)$-cone $C$.

With $k>1$, due to the loss of convexity, the ``order"-relation defined by $C$ (see Section 2) is not a partial order since it is neither antisymmetric nor transitive. In addition, kernel tools such as the Nonordering of Limit Sets and Limit Set Dichotomy which have played a crucial role in proving Hirsch's Generic Convergence Theorem are no longer valid. We need novel techniques to understand the generic dynamics without these technical tools and under the assumptions that the $\omega$-limit set of even a pseudo-ordered orbit may contain an ordered homoclinic structure.

In the present paper, we focus on the generic behavior of a flow in $\RR^n$ strongly monotone
with respect to $k(\ge 2)$-cone C. Before describing our approach and main results, we formulate the basic assumptions:

\begin{itemize}
\item[{\bf (FWW)}] The flow $\Phi_t$ on $\RR^n$ is strongly monotone with respect to a $k$-cone $C$, is $C^{1,\alpha}$-smooth and its $x$-derivative $D_x\Phi_t$ satisfies that $D_x\Phi_t(C\setminus \{0\})\subset {\rm Int}C$ for $t>0$.
\end{itemize}

We refer to Section 2 for more detailed definitions about strong monotonicity and $H\ddot{o}lder$ continuity. In what follows, we assume (FWW) assumption holds. Let $$Q=\{x\in \mathbb{R}^n: \text{the orbit }O(x) \text{ is pseudo-ordered}\}.$$

\vskip 1mm
\noindent \textbf{Theorem A. (Generic Dynamics Theorem)} Let $\mathcal{D}\subset \RR^n$ be an open, bounded, and positively invariant set. Then the set $\{x\in \mathcal{D}: x\in Q \text{ or }\omega(x) \text{ is a singleton}\}$ contains an open and dense subset of $\mathcal{D}$.
\vskip 2mm

This theorem, in a slightly stronger version, will be proved in Section 5 (Theorem \ref{T:Open-dense}). It concludes that, in finite-dimensional cases, {\it generic orbits} of a smooth flow strongly monotone with respect to $C$ are either pseudo-ordered or convergent to a single equilibrium. Needlessly to say that, if the rank $k=1$, Theorem A in conjunction with the Monotone Convergence Criterion implies the celebrated Hirsch's Generic Convergence Theorem.

In the special case where $k=2$, together with the results obtained in our study \cite{F-W-W}, we have the following:

\vskip 1mm
\noindent \textbf{Theorem B. (Generic Poincar$\mathbf{\acute{e}}$-Bendixson Theorem)} Let $k=2$. Let $\mathcal{D}\subset \RR^n$ be an open, bounded, and positively invariant set. Then, for generic (open and dense) points $x\in D$, the $\omega$-limit set $\omega(x)$ containing no equilibria is a single closed orbit.
\vskip 2mm

Theorem B (and its slightly stronger version Theorem \ref{T:Open-dense-PB}) is titled as the {\it Generic Poincar\'{e}-Bendixson Theorem}. Note that while the Poincar\'{e}-Bendixson type result was obtained in \cite{San09,F-W-W} for just certain (i.e., pseudo-ordered) orbits, here we conclude the Poincar$\acute{e}$-Bendixson property for generic orbits.

\vskip 2mm

A critical ingredient of our approach is the smooth ergodic argument motivated by Pol$\acute{a}\check{c}$ik and Tere$\check{s}\check{c}\acute{a}$k \cite{Po1}. According to Tere\v{s}\v{c}\'ak \cite{Te1}, the cone invariance condition of $D_x\Phi_t$ in {\bf (FWW)} implies that the linear skew-product flow $(\Phi_t,D\Phi_t)$ admits a {\it $k$-exponential separation} along any compact invariant set $\Omega$ associated to the $k$-cone $C$.
Roughly speaking, this property describes that the product bundle $\Omega\times \RR^n$ admits a decomposition $\Omega\times \RR^n=E\bigoplus F$ into two closed invariant subbundles of $(\Phi_t,D\Phi_t)$, one with $k$-dimensional fibres $\{E_x\}_{x\in \Omega}$, the other with $(n-k)$-dimensional fibres $\{F_x\}_{x\in \Omega}$ not containing a nonzero vector in $C$, such that for any $(x,v)\in \Omega\times \RR^n$ with $v\notin F_x$, the $E$-component of $D_x\Phi_tv$ dominates the $F$-component as $t\to +\infty$ (see Definition \ref{D:ES-separation} or its versions for random dynamics in \cite{LW1,LW2}). This $k$-exponentially separated continuous decomposition is
also called dominated-splitting in Differentiable Dynamical Systems (see, for example, \cite{Puj,BG,BDV,QTZ}) and in control theory (see, for example, \cite{CK}).

When $C$ is a $1$-cone and $\Omega$ is just a single point, the $k$-exponential separation is equivalent to the celebrated Krein-Rutman Theorem \cite{KR,M-PN1,Nu} (or Perron-Frobenius Theorem \cite{Gan} in finite-dimensions). While, for strongly positive linear skew-product (semi)flows or random dynamical systems, the existence of $1$-exponential separation associated to a convex cone $K$ can be found in \cite{HuPo,Mi2,MiSh1,MiSh2,MiSh3,MiSh4,PT2,Ru} with important applications. In particular, the $1$-exponential separation plays a key role in proving the generic convergence to cycles for classical smooth strongly monotone discrete-time dynamical systems \cite{PT1,Te2}.

The $k$-exponential separation is one of the major tools in the proof of our main results. In particular, the $k$-exponential separation allows us to employ the linearization to examine the behavior of orbits of $\Phi_t$ near a compact invariant set $\Omega$. Another critical tool we use is {\it the $k$-Lyapunov exponent} of $x\in \Omega$, defined as
\begin{equation*}
\lambda_{kx}=\limsup_{t\to +\infty}\dfrac{\log m(D_x\Phi_t|_{_{E_x}})}{t},
\end{equation*}
where $m(D_x\Phi_t|_{_{E_x}})=\inf\limits_{v\in E_x\cap S}\norm{D_x\Phi_tv}$ is
the infimum norm of $D_x\Phi_t$ restricted to $E_x$. Properties of $k$-exponential separation and
$\lambda_{kx}$ will be described in Section 3, among which is an important characterization of a point in $k$-cone $C$ in terms of its projections to $E$ and $F$ of the $k$-exponential separation (Lemma \ref{L:ES-Vs-Cone}).

The Multiplicative Ergodic Theorem (see, for example, \cite{LL,OSe,Vi,John-Pa-S}), ensures that $\lambda_{kx}$ is actually the limit, not just the superior limit, for ``most" points $x\in \Omega$. Such points for which $\lambda_{kx}$ is the limit are said to be regular; and other points are said to be irregular. According to the sign of the $k$-Lyapunov exponents of the regular/irregular points on any given $\omega$-limit set $\omega(x)$, we develop our technical proofs for main Theorems into three cases in Section 4. Firstly, we show that if $\omega(x)$ contains a regular point $z$ such that $\lambda_{kz}\le 0$, then either $x\in Q$ or $\omega(x)$ is a singleton (Theorem \ref{omega-lambdak<=0}). Secondly, we prove that if $\lambda_{kz}>0$ for any $z\in \omega(x)$, then $x$ is highly unstable (Lemma \ref{Distance-nonlinear-system}), and belongs to the closure $\overline{Q}$ (Theorem \ref{omega-lambdak>0}). Thirdly, we show that if $\lambda_{k\tilde{z}}>0$ for any regular point $\tilde{z}\in \omega(x)$ but there is an irregular point $z$ with $\lambda_{kz}\leq 0$, then $x\in \overline{Q}$ (Theorem \ref{regular-lambdak>0}).

The smooth ergodic argument of Pol\'{a}\v{c}ik and Tere\v{s}\v{c}\'ak \cite{PT1} was used in our proof. However, for $k(\ge 2)$-cones, the loss of convexity of $C$ prevents the utilization of the order-topology (that is valid only for $1$-cone) to estimate the evolution of the orbits near $\omega(x)$. We overcome such a difficulty by Lemma \ref{L:ES-Vs-Cone} that provides a characterization of the point in $C$ in terms of its projections to the bundles $E$ and $F$ associated with the $k$-exponential separation. We should mention that for the flows monotone with respect to a 1-cone, some ergodic theory arguments were used in \cite{PT1} to exclude the possibility of the third case. This case may happen when $k>1$, so we need to deal with this case by estimating the $t$-derivative of $\Phi_t$ at the irregular points.

Our generic convergence and generic Poincar\'{e}-Bendixson theorems hold for any flow satisfying (FWW). We illustrate these general results in Section 6 by an application to an $n$-dimensional ODE system with a quadratic cone.  R. A. Smith \cite{SmithR-2,SmithR-3} proved a Poincar\'{e}-Bendixson theorem for this system by assuming the existence of a certain infinite quadratic Lyapunov function, and Sanchez \cite{San09} established the connections between Smith's results and the smooth systems strongly monotone with respect to some cones of rank-$2$. Our general theory allows us to establish a {\it generic Poincar\'{e}-Bendixson Theorem} for such a high-dimensional system even if a quadratic Lyapunov function can not be constructed.

\section{Notations and Preliminary Results}

\indent We start with some notations and a few definitions. Let $\mathbb{R}^n$ be the $n$-dimensional Euclidean space with a norm $\norm{\cdot}$. A flow on $\RR^n$ is a continuous map $\Phi:\mathbb{R}\times \RR^n\to \RR^n$ with: $\Phi_0={\rm Id}$ and $\Phi_t\circ\Phi_s=\Phi_{t+s}$ for $t,s\in \RR$. Here, $\Phi_t(x)=\Phi(t,x)$ for $t\in \RR$ and $x\in \RR^n$ and ${\rm Id}$ is the identity map on $\RR^n$.
 A flow $\Phi_t$ on $\RR^n$ is {\it $C^{1,\alpha}$-smooth} if $\Phi|_{\RR\times \RR^n}$ is a $C^{1,\alpha}$-map (a $C^1$-map with a locally $\alpha$-H\"{o}lder derivative) with $\alpha\in (0,1]$. The derivatives of $\Phi_t$ with respect to $x$, at $(t,x)$, is denoted by $D_x\Phi_t(x)$.

Let $x\in \mathbb{R}^n$,  the {\it orbit of $x$} is denoted by $O(x)=\{\Phi_t(x):t\ge 0\}$.
An {\it equilibrium} (also called {\it a trivial orbit}) is a point $x$ for which $O(x)=\{x\}$. Let $E$ be the set of all equilibria of $\Phi_t$. A nontrivial orbit $O(x)$ is said to be a {\it$T$-periodic orbit} for some $T>0$ if $\Phi_T(x)=x$.  The {\it$\omega$-limit set} $\omega(x)$ of $x\in \RR^n$ is defined by $\omega(x)=\cap_{s\ge 0}\overline{\cup_{t\ge s}\Phi_t(x)}$. If $O(x)$ is bounded, then $\omega(x)$ is nonempty, compact, connected and invariant.

 A subset $D$ is called {\it positively invariant} if $\Phi_{t}(D)\subset D$ for any $t\geq 0$, and is called {\it invariant} if $\Phi_{t}(D)=D$ for any $t\in \mathbb{R}$. Given a subset $D\subset \RR^n$, {\it the orbit $O(D)$ of $D$} is defined as $O(D)=\bigcup_{x\in D}O(x)$.
 A subset $D$ is called {\it $\omega$-compact} if $O(x)$ is bounded for each $x\in D$ and $\bigcup_{x\in D}\omega(x)$ is bounded. Clearly, $D$ is $\omega$-compact provided that the orbit $O(D)$ is bounded.

\vskip 2mm
Now we define $k$-cones of $\mathbb{R}^n$.
\begin{definition}\label{D:k-cone}
 A closed set $C\subset \mathbb{R}^n$ is called a cone of rank-$k$ ({\it abbr. $k$-cone}) if the following
 are satisfied:

 {\rm i)} For any $v\in C$ and $l\in \RR,$ $lv\in C$;

 {\rm ii)}  $\max\{\dim W:C\supset W \text{ linear subspace}\}=k.$

 \noindent Moreover, the integer $k(\ge 1)$ is called the rank of $C$.
\end{definition}
A $k$-cone $C\subset \RR^n$ is said to be {\it solid} if its interior $\tint C\ne \emptyset$; and $C$ is called {\it $k$-solid} if there is a $k$-dimensional linear subspace $W$ such that $W\setminus \{0\}\subset \tint C$. Given a $k$-cone $C\subset \RR^n$, we say that $C$ is {\it complemented} if there exists a $k$-codimensional space $H^{c}\subset \RR^n$ such that $H^{c}\cap C=\{0\}$.

For two points $x,y\in \RR^n$, we say that {\it $x$ and $y$ are ordered, denoted by $x\thicksim y$}, if $x-y\in C$. Otherwise, $x,y$ are called to be {\it unordered}. The pair $x,y\in X$ are said to be {\it strongly ordered}, denoted by $x\thickapprox y$, if $x-y\in \tint C$.

\begin{definition}\label{D:stongly-monotne}
 A flow $\Phi_t$ on $\RR^n$ is called {\it monotone with respect to a $k$-solid cone $C$} if
$$\Phi_t(x)\thicksim\Phi_t(y),\, \text{ whenever } x\thicksim y \text{ and } t\ge 0;$$ and $\Phi_t$ is called {\it strongly monotone with respect to $C$} if $\Phi_t$ is monotone with respect to $C$ and
$$\Phi_t(x)\approx \Phi_t(y),\, \text{ whenever } x\ne y, x\thicksim y \text{ and } t>0.$$
\end{definition}

Throughout this paper, we always impose the following assumptions:

\begin{itemize}
\item[{\bf (FWW)}] The flow $\Phi_t$ is $C^{1,\alpha}$-smooth and strongly monotone with respect to a $k$-cone $C$. Moreover, the $x$-derivative $D_x\Phi_t$ satisfies $D_x\Phi_t(C\setminus \{0\})\subset {\rm Int}C$ for $t>0$.
\end{itemize}

\noindent We note that, due to the lack of convexity, the cone invariance condition in (FWW) does not have to imply $\Phi_t$ is strongly monotone with respect to $k(\ge 2)$-cone $C$.

\begin{definition}\label{D:two-type-orbit}
A nontrivial orbit $O(x)$ is called {\it pseudo-ordered} (also called {\it of Type-I}), if there exist two distinct points $\Phi_{t_1}(x),\Phi_{t_2}(x)$ in $O(x)$ such that $\Phi_{t_1}(x)\thicksim\Phi_{t_2}(x)$. Otherwise,
 $O(x)$ is called {\it unordered} (also called {\it of Type-II}).
\end{definition}
Hereafter, we let
$Q=\{x\in \mathbb{R}^n: \text{the orbit }O(x) \text{ is pseudo-ordered}\}.$ Clearly, $Q$ is an open subset of $\RR^n$ provided that $\Phi_t$ is strongly monotone with respect to $C$.
The following co-limit Lemma, which will be useful in our proof, is due to \cite[Lemma 4.3]{F-W-W}.

\begin{lemma}\label{co-limit}
Assume that $\Phi_t$ is strongly monotone with respect to $C$. If $x_1\thicksim x_2$ and there is a sequence $t_n\to \infty$ such that $\Phi_{t_k}(x_1)\rightarrow z$ and $\Phi_{t_k}(x_2)\rightarrow z$, then one has $z\in Q\cup E$.
\end{lemma}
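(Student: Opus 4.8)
If $z\in E$ there is nothing to prove, so assume $z\notin E$; the goal is then to show $z\in Q$, i.e.\ that $O(z)$ carries two distinct $C$-ordered points. (We take $x_1\ne x_2$, which is the situation in which the lemma is applied; if $x_1=x_2$ the hypothesis gives only $z\in\omega(x_1)$, which does not by itself force $z\in Q\cup E$.) From $x_1\thicksim x_2$ and $x_1\ne x_2$, monotonicity gives $\Phi_t(x_1)\thicksim\Phi_t(x_2)$ for $t\ge0$, and strong monotonicity improves this to $w_t:=\Phi_t(x_1)-\Phi_t(x_2)\in\tint C$ for every $t>0$. Since $\Phi_{t_n}(x_1)\to z$ and $\Phi_{t_n}(x_2)\to z$, we have $w_{t_n}\to0$, and more generally $w_{t_n+s}=\Phi_s(\Phi_{t_n}(x_1))-\Phi_s(\Phi_{t_n}(x_2))\to0$ for every fixed $s$.

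The plan is to manufacture an ordered pair on $O(z)$ by a limiting comparison of the two co-limiting orbit segments, using that $C$ is closed. One preliminary reduction uses the gaps $t_{n+1}-t_n$: passing to a subsequence we may assume $t_{n+1}-t_n\to\sigma\in[0,+\infty]$; if $0<\sigma<+\infty$ then joint continuity of the flow gives $\Phi_{t_{n+1}}(x_1)=\Phi_{\,t_{n+1}-t_n}(\Phi_{t_n}(x_1))\to\Phi_\sigma(z)$ while also $\Phi_{t_{n+1}}(x_1)\to z$, so $\Phi_\sigma(z)=z$ and $z$ lies on a periodic orbit — a case one treats separately, via the linearization along $O(z)$ (the Floquet structure, equivalently the $k$-exponential separation over the compact invariant set $O(z)$). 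This isolates the substantive case $\sigma\in\{0,+\infty\}$, i.e.\ the ``fast-return'' situation in which $\{t_n\}$ is very sparse.

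For that case I would argue as follows, and here lies the main obstacle. The difference $\Phi_{t_n+s}(x_1)-\Phi_{t_n+r}(x_2)=w_{t_n+s}+\bigl(\Phi_{t_n+s}(x_2)-\Phi_{t_n+r}(x_2)\bigr)$ tends to $\Phi_s(z)-\Phi_r(z)$, so an ordered pair on $O(z)$ would follow if one could force such a difference into $C$ along a subsequence. But because $C$ is not convex, $\thicksim$ is neither transitive nor additive, so the equal-time relations $w_{t_n}\in\tint C$ that monotonicity supplies cannot be chained or transported to the unequal-time comparison one needs; and for $r\ne s$ the difference above is dominated by the uncontrolled term $\Phi_{t_n+s}(x_2)-\Phi_{t_n+r}(x_2)$ rather than by the vanishing interior term $w_{t_n+s}$, so the naive limiting comparison degenerates. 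The remedy, I expect, is to use the interior invariance $D_x\Phi_t(C\setminus\{0\})\subset\tint C$ of \textbf{(FWW)} quantitatively: normalizing and passing to a limiting direction $\hat w\in C\setminus\{0\}$ of $w_{t_n}$, the $C^1$-smoothness and the continuity of $D_x\Phi$ give $w_{t_n+s}/\norm{w_{t_n}}\to D_x\Phi_s(z)\hat w\in\tint C$, which pins down the direction in which $O(x_1)$ sits relative to $O(x_2)$ near $z$; combining this with the structure of $\omega(x_1)$ near $z$ — ultimately the $k$-exponential separation over the compact invariant set $\overline{O(z)}\subset\omega(x_1)$ and the characterization of points of $C$ by their $E$/$F$-components (Lemma \ref{L:ES-Vs-Cone}) — should force the strongly ordered shadowing pair to impose that $O(z)$ itself is pseudo-ordered. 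Converting the vanishing interior relation $w_{t_n}\in\tint C$ into a genuine ordered pair on $O(z)$ is the step I expect to be hardest.
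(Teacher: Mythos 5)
Your proposal does not actually prove the lemma: by your own account the decisive step --- converting the vanishing strongly ordered differences $w_{t_n}\in\tint C$ into a pair of distinct ordered points \emph{on} $O(z)$ --- is left open, and the sketch you offer for it would not close the gap. Passing to a limit direction $\hat w=\lim w_{t_n}/\norm{w_{t_n}}\in C\cap S$ and noting $D_z\Phi_s\hat w\in\tint C$ only yields information about the linearized flow along $O(z)$ in the direction $\hat w$; but $z\in Q$ requires $\Phi_a(z)-\Phi_b(z)\in C$ for some $a\ne b$ with $\Phi_a(z)\ne\Phi_b(z)$, and for nearby times that difference is governed by the tangent vector $v_z:=\frac{d}{dt}|_{t=0}\Phi_t(z)$, not by $\hat w$. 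Nothing in your argument relates $\hat w$ to $v_z$ (compare case (A1) in the proof of Theorem \ref{regular-lambdak>0}, where it is precisely $v_z$ that is shown to enter $\tint C$ under $D_z\Phi_t$, and that is what produces the ordered pair on $O(z)$). The auxiliary trichotomy on $\sigma=\lim(t_{n+1}-t_n)$ also buys nothing: in the case $\sigma\in(0,\infty)$ you conclude $z$ is periodic, but a nontrivial periodic orbit of a flow monotone with respect to a $k(\ge2)$-cone need not be pseudo-ordered, and ``treat via the Floquet structure'' is not an argument; in the cases $\sigma\in\{0,\infty\}$ the dichotomy is never used (and $\sigma=0$ is the opposite of a sparse return-time sequence).

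There is also a mismatch of hypotheses that signals the approach is off-target. The lemma assumes only that $\Phi_t$ is strongly monotone with respect to $C$; the paper does not prove it but imports it verbatim from \cite[Lemma 4.3]{F-W-W}, whose setting is a (not necessarily smooth) semiflow on a Banach space. The intended argument is therefore purely order-theoretic and topological and cannot invoke $C^{1,\alpha}$ smoothness, the derivative condition $D_x\Phi_t(C\setminus\{0\})\subset\tint C$, the $k$-exponential separation, or Lemma \ref{L:ES-Vs-Cone}, all of which your sketch leans on. Your preliminary observations (the reduction to $z\notin E$, the implicit hypothesis $x_1\ne x_2$, and the relation $w_t\in\tint C$ for $t>0$) are correct, but the proof of the lemma is the content of \cite[Lemma 4.3]{F-W-W} and should be taken from there rather than rebuilt through the linearization machinery of Sections 3--4.
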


\section{$k$-Exponential Separation and $k$-Lyapunov Exponents}

In the first part of this section, we will focus on a crucial tool in our approach called the $k$-exponential separation
 along any compact invariant set associated to the $k$-cone $C$.

 Let $G(k,\mathbb{R}^n)$ be {\it the Grassmanian of $k$-dimensional linear subspace of $\mathbb{R}^n$}, which consists of all $k$-dimensional linear subspace in $\mathbb{R}^n$. $G(k, \mathbb{R}^n)$ is a completed metric space by endowing {\it the gap metric} (see, for example, \cite{Kato,LL}). More precisely, for any nontrivial closed subspaces $L_1,L_2\subset \RR^n$, define that
 \begin{equation*}\label{E:GapDistance}%E:GapDistance
 d(L_1,L_2)=\max\left\{\sup_{v\in E\cap S}\inf_{u\in F\cap S}|v-u|, \sup_{v\in F\cap S}\inf_{u\in E\cap S}|v-u|\right\},
 \end{equation*}
 where $S=\{v\in \RR^n:\norm{v}=1\}$ is the unit ball.
 For a solid $k$-cone $C\subset X$, we denote by $\G_k(C)$ the set of $k$-dimensional subspaces inside $C$, i.e., $$\G_k(C)=\{L\in G(k,\mathbb{R}^n):\,L\subset C\}.$$

 Let $K\subset \mathbb{R}^n$ be a compact subset and $S_t$ be a flow on $K$. Let $\{R_x^t,\,x\in K,t\in \RR\}$ be a family of bounded linear maps on $\RR^n$ satisfying: (i). the map
  $(t,x)\mapsto R_x^t: K\times \RR\to L(\RR^n)$ is continuous; (ii). $R_x^{t_1+t_2}=R_{S_{t_2}x}^{t_1}\circ R_x^{t_2}$ for all $t_1,t_2\in \RR$ and $x\in K$. The pair of families of maps $(S_t,R^t)$ is called a vector bundle (or linear skew-product) flow on $K\times \RR^n$.
In particular,  $(\Phi_t,\,D\Phi_t)$ is a linear skew-product flow on $K\times \RR^n$.

 Let $\{Y_x\}_{x\in K}$ be a family of $k$-dimensional subspaces of $\mathbb{R}^n$. We call $K\times (Y_x)$ {\it a $k$-dimensional continuous vector bundle} if the map $K \rightarrow G(k,\,\mathbb{R}^n);x\mapsto Y_x$ is continuous. The continuous bundle $K\times (Y_x)$ is invariant with respect to $(\Phi_t,\,D\Phi_t)$ if $D\Phi_t(x)Y_x= Y_{Fx}$ for any $x\in K$ and $t\ge 0$.

\begin{definition}\label{D:ES-separation}
 Let $K\subset \RR^n$ be an invariant compact subset for $\Phi_t$. The linear skew-flow $(\Phi_t,\,D\Phi_t)$ admits a {\bf $k$-exponential separation along $K$} (for short, $k$-exponential separation), if there are $k$-dimensional continuous bundle $K\times (E_x)$ and $(n-k)$-dimensional continuous bundle $K\times (F_x)$ such that

{\rm (i)} $\RR^n=E_x\oplus F_x$, for any $x\in K$;

{\rm (ii)} $D_x\Phi_tE_x=E_{\Phi_t(x)}$, $D_x\Phi_tF_{x}\subset F_{\Phi_t(x)}$ for any $x\in K$ and $t>0$;

{\rm (iii)}  there are constants $M>0$ and $0<\gamma<1$ such that $$\norm{D_x\Phi_tw}\leq M\gamma^{t}\norm{D_x\Phi_tv}$$ for all $x\in K$, $w\in F_x\cap S$, $v\in E_{x}\cap S$ and $t\ge 0$, where $S=\{v\in \RR^n:\norm{v}=1\}$.

Let $C\subset \RR^n$ be a solid $k$-cone. If, in addition,

{\rm (iv)} $E_x\subset {\rm Int}C\cup \{0\}$ and $F_x\cap C=\{0\}$ for any $x\in K$,

\noindent then $(\Phi_t,\,D\Phi_t)$ is said to admit a {\bf $k$-exponential separation along $K$ associated with $C$}.
\end{definition}

The following proposition is essentially due to Tere\v{s}\v{c}\'ak \cite{Te1}.

\begin{prop}\label{P:Cones-imply-ES}
Assume {\rm {\bf (FWW)}}. Then $(\Phi_t,D\Phi_t)$ admits a $k$-exponential separation along $K$ associated with $C$ along any invariant compact set $K$.
\end{prop}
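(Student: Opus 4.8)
The plan is to derive the statement from the exponential-separation theory of Tere\v{s}\v{c}\'ak \cite{Te1}, after checking that {\bf (FWW)} puts us exactly in its setting. Since $K$ is invariant, $\Phi_t|_K$ is a flow on the compact metric space $K$ and $(\Phi_t|_K,D\Phi_t)$ is a linear skew-product flow over it, each fibre map $D_x\Phi_t$ being a linear isomorphism with inverse $(D_x\Phi_t)^{-1}=D_{\Phi_t(x)}\Phi_{-t}$, jointly continuous in $(t,x)$. By hypothesis $C$ is $k$-solid, hence solid, so $\Gamma_k(C)$ is nonempty (any witnessing $k$-subspace lies in it), and the last clause of {\bf (FWW)}, $D_x\Phi_t(C\setminus\{0\})\subset\tint C$ for $t>0$, says precisely that the cocycle $D\Phi_t$ is fibrewise \emph{strongly positive} with respect to $C$. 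These are the standing hypotheses under which \cite{Te1} produces a $k$-exponential separation, so the task is to recover, from that construction, the bundles $E$, $F$ and the four conditions of Definition~\ref{D:ES-separation} in the continuous-time formulation. I indicate below how each piece is obtained and where the real work lies.

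\textbf{The dominant bundle $E$.} For $s>0$ the pushforward action of $D_{\Phi_{-s}(x)}\Phi_s$ on the Grassmannian $G(k,\RR^n)$ carries $\Gamma_k(C)$ into itself, in fact into $\{L:L\setminus\{0\}\subset\tint C\}$, and, by strong positivity together with the compactness of $K$, for all sufficiently large $s$ this action contracts — uniformly in $x\in K$ — a suitable (Birkhoff/Hilbert-type) metric on that set; equivalently, $\bigcap_{s>0}D_{\Phi_{-s}(x)}\Phi_s\bigl(\Gamma_k(C)\bigr)$ reduces to a single subspace. One then sets $E_x:=\lim_{s\to+\infty}D_{\Phi_{-s}(x)}\Phi_s(L)$, which exists, is independent of $L\in\Gamma_k(C)$, depends continuously on $x$ (from the uniform contraction plus continuity of the data), is $k$-dimensional, lies in $\tint C\cup\{0\}$ (being the $D_{\Phi_{-t}(x)}\Phi_t$-image, $t>0$, of $E_{\Phi_{-t}(x)}$), and satisfies $D_x\Phi_tE_x=E_{\Phi_t(x)}$ for all $t\in\RR$ by invertibility of $D\Phi_t$. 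This yields the $E$-parts of (i), (ii) and (iv).

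\textbf{The complement $F$ and the spectral gap --- the main obstacle.} The delicate step is to produce an $(n-k)$-dimensional continuous subbundle $F$ with $\RR^n=E_x\oplus F_x$ and $F_x\cap C=\{0\}$, together with the uniform domination $\norm{D_x\Phi_tw}\le M\gamma^t\norm{D_x\Phi_tv}$ for $w\in F_x\cap S$, $v\in E_x\cap S$ and $t\ge0$. In \cite{Te1} this is achieved by a companion construction --- for instance $F_x:=\lim_{t\to+\infty}D_{\Phi_{-t}(x)}\Phi_t(M)$ for a fixed algebraic complement $M$ of some $L\in\Gamma_k(C)$ (equivalently, $F_x$ as the annihilator of the dominant $k$-bundle of the adjoint-inverse cocycle $(D_x\Phi_t)^{-\ast}$ on $(\RR^n)^{\ast}$). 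The substantive points to verify are: the limit exists and is independent of $M$; $F_x$ is $(n-k)$-dimensional, $D\Phi_t$-invariant, and transverse to $E_x$ (it is the limiting kernel direction of the projections onto $E$); the identity $F_x\cap C=\{0\}$, which ultimately rests on $C$ having rank \emph{exactly} $k$; and that the uniform Grassmannian contraction rate converts, by a standard computation, into the uniform norm estimate (iii), all constants staying uniform over $K$ by compactness. I expect the construction and control of $F$ for a genuinely non-convex cone to be the main difficulty, since the convexity and ambient-order tools that are available when $k=1$ are not at our disposal here.

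\textbf{From a single map to the flow.} If \cite{Te1} is phrased for one map rather than for a skew-product flow, I would apply it to the time-one map $\Phi_1$ on $K$ to obtain $\Phi_1$-invariant bundles $E$, $F$ with the integer-time version of (iii), then observe that these bundles are automatically $\Phi_t$-invariant for every $t$ --- using $\Phi_t\Phi_1=\Phi_1\Phi_t$ and the uniqueness of the dominant bundle and of its invariant complement --- and finally pass from integer times to all $t\ge0$ in (iii) using $\sup\{\norm{D_y\Phi_{r}v}:\norm v=1,\,|r|\le1,\,y\in K\}<\infty$ and $\inf\{\norm{D_y\Phi_rv}:\norm v=1,\,0\le r\le1,\,y\in K\}>0$.
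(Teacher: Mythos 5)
Your proposal takes essentially the same route as the paper: the paper's entire proof of this proposition is the citation to Tere\v{s}\v{c}\'ak \cite[Corollary 2.2]{Te1} (with \cite[Theorem 4.1]{Te1} and \cite[Proposition A.1]{Mi11} as backup), i.e.\ exactly the reduction you describe, so your write-up is at least as complete as the paper's. The one detail I would flag in your sketch is the formula $F_x=\lim_{t\to+\infty}D_{\Phi_{-t}(x)}\Phi_t(M)$: pushing a fixed complement \emph{forward} aligns it with the dominant directions, so the slow bundle must instead be obtained by pulling complements back under the forward cocycle (or via the adjoint cocycle, as in your parenthetical).
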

\begin{proof}
See Tere\v{s}\v{c}\'ak \cite[Corollary 2.2]{Te1}. One may also refer to Tere\v{s}\v{c}\,{a}k \cite[Theorem 4.1]{Te1} and the same argument in \cite[Proposition A.1]{Mi11}.
\end{proof}

Hereafter, by virtue of Proposition \ref{P:Cones-imply-ES}, we will always assume that $(\Phi_t,\,D\Phi_t)$ admits a $k$-exponential continuous separation $\mathbb{R}^n=E_y\oplus F_y$ on a closed bounded set $K$ with respect to $C$, which satisfies (i)-(iv) in Definition \ref{D:ES-separation}.

\vskip 2mm
We now present several critical properties of such $k$-exponential continuous separation with respect to $C$. Given any point $v\in \RR^n$ and any subset $B\subset \RR^n$, we let $d(v,B)=\inf_{w\in B}\norm{v-w}$.

\begin{lemma}\label{vector-in-C-farawayfrom-V1-delta} There exists a constant $\delta^{\prime}>0$ such that $$\{v\in \RR^n: d(v,\,E_y\cap S)\leq \delta^{\prime}\}\subset \text{Int}\,C \,\,\, \text{ for any }y\in K.$$
\end{lemma}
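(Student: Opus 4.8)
\textbf{Proof proposal for Lemma \ref{vector-in-C-farawayfrom-V1-delta}.}

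The plan is to argue by compactness, exploiting that $K$ is compact, that the bundle $x\mapsto E_x$ is continuous into $G(k,\RR^n)$, and that each fibre $E_x$ satisfies $E_x\setminus\{0\}\subset\tint C$ by property (iv) of the $k$-exponential separation. First I would consider the set
\[
\mathcal{K}=\bigcup_{y\in K}\big(\{y\}\times (E_y\cap S)\big)\subset K\times S.
\]
Since $K$ is compact, $S$ is compact, and the assignment $y\mapsto E_y$ is continuous in the gap metric, the set $E_y\cap S$ varies continuously (in the Hausdorff metric) with $y$; hence $\mathcal{K}$ is a closed, and therefore compact, subset of $K\times S$. The function $(y,v)\mapsto d(v,\partial C)$ (distance from $v$ to the boundary of $C$, with the convention that it is positive exactly on $\tint C$) is continuous, and by property (iv) it is strictly positive on every point of $\mathcal{K}$: indeed $v\in E_y\cap S$ means $v\in E_y\setminus\{0\}\subset\tint C$, so $d(v,\partial C)>0$. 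By compactness of $\mathcal{K}$ this continuous positive function attains a positive minimum; call it $3\delta'$ (the factor is just for convenience).

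Next I would upgrade this to the stated uniform neighbourhood statement. Fix $y\in K$ and let $v\in\RR^n$ with $d(v,E_y\cap S)\le\delta'$; pick $u\in E_y\cap S$ with $\norm{v-u}\le\delta'$ (the infimum is attained since $E_y\cap S$ is compact, or one may simply take a near-minimiser and absorb an $\eps$). Then $u\in\tint C$ with $d(u,\partial C)\ge 3\delta'>\norm{v-u}$, so the open ball of radius $d(u,\partial C)$ about $u$ lies in $\tint C$ and contains $v$; hence $v\in\tint C$. Since $y\in K$ was arbitrary, this proves
\[
\{v\in\RR^n:\ d(v,E_y\cap S)\le\delta'\}\subset\tint C\qquad\text{for all }y\in K,
\]
as claimed. (Strictly, one should check $C$ is solid so that $\partial C$ and $\tint C$ behave as expected; this is part of the standing hypotheses, the $k$-cone $C$ being $k$-solid and in particular solid.)

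The only genuine subtlety — the step I expect to need the most care — is verifying that $\mathcal{K}=\bigcup_{y\in K}\{y\}\times(E_y\cap S)$ is closed, i.e. that continuity of $y\mapsto E_y$ in the gap metric really does make the unit spheres $E_y\cap S$ vary continuously in the Hausdorff metric and hence that a limit of points $(y_n,v_n)\in\mathcal{K}$ with $y_n\to y$, $v_n\to v$ satisfies $v\in E_y$. This is standard: from $d(E_{y_n},E_y)\to 0$ and $v_n\in E_{y_n}$, $\norm{v_n}=1$, one finds $u_n\in E_y$ with $\norm{v_n-u_n}\to 0$, so $u_n\to v$, and $E_y$ being closed gives $v\in E_y$ with $\norm v=1$. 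Everything else is routine compactness and the elementary geometry of an open convex-free neighbourhood inside the solid set $C$.
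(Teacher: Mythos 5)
Your proof is correct and follows essentially the same route as the paper: both reduce the claim to compactness of the union of the unit spheres $E_y\cap S$ (you package this as the set $\mathcal{K}\subset K\times S$, the paper as $M=\bigcup_{y\in K}(E_y\cap S)\subset\RR^n$), using continuity of $y\mapsto E_y$ in the gap metric, and then extract a uniform $\delta'$ from the fact that this compact set sits inside the open set $\tint C$. Your version is slightly more explicit about where $\delta'$ comes from (minimizing the continuous function $d(\cdot,\partial C)$), but there is no substantive difference.
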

\begin{proof}
Let $M=\bigcup_{y\in K}(E_y\cap S)$. Clearly, $M\subset {\rm Int}C$. We will prove that $M$ is compact. In fact, given any sequence $\{w_n\}\subset M$, there is a sequence $\{y_n\}\subset K$ such that $w_n\in E_{y_n}\cap S$ for every $n\ge 1$. Without loss of generality, we assume that $y_n\to y\in K$. Since the map $y\mapsto E_y\cap S$ is continuous, one can find a sequence  $\{v_n\}\subset E_y\cap S$ such that $\norm{w_n-v_n}\rightarrow 0$ as $n\to \infty$.
Then the compactness of $E_y\cap S$ implies that there is a subsequence $v_{n_k}$ of $v_n$ such that
$v_{n_k}\to w\in E_y\cap S$. So, we have $w_{n_k}\to w\in M$; and hence, $M$ is compact. Therefore, one can find a $\delta'>0$ such that $\{v\in \RR^n: d(v,\,E_y\cap S)\leq \delta^{\prime}\}\subset \text{Int}C$ for any $y\in K$.
\end{proof}

\begin{lemma}\label{ES-distance} There exists a constant $\delta^{\prime\prime}>0$ such that $d(v,C)>\delta^{\prime\prime}$ for any $v\in \bigcup_{y\in K} (F_y\cap S)$.
\end{lemma}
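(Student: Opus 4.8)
The plan is to mirror the argument used for Lemma \ref{vector-in-C-farawayfrom-V1-delta}, but now working with the complementary bundle $F$. Set $N=\bigcup_{y\in K}(F_y\cap S)$. The two facts I need are that $N$ is compact and that $N$ is disjoint from the closed set $C$; the conclusion then follows because the distance between a compact set and a disjoint closed set is strictly positive.

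First I would check $N\cap C=\emptyset$. This is immediate from property (iv) of the $k$-exponential separation: if $v\in F_y\cap S$ happened to lie in $C$, then $v\in F_y\cap C=\{0\}$, contradicting $\norm{v}=1$. Next I would establish compactness of $N$ exactly as in the proof of Lemma \ref{vector-in-C-farawayfrom-V1-delta}. Take any sequence $\{w_n\}\subset N$, choose $y_n\in K$ with $w_n\in F_{y_n}\cap S$, and pass to a subsequence so that $y_n\to y\in K$ (using compactness of $K$). Since the map $y\mapsto F_y$ is continuous into $G(n-k,\RR^n)$ with the gap metric, for each $n$ one can pick $v_n\in F_y\cap S$ with $\norm{w_n-v_n}\to 0$; then compactness of $F_y\cap S$ yields a further subsequence $v_{n_k}\to w\in F_y\cap S\subset N$, whence $w_{n_k}\to w\in N$. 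Thus $N$ is compact.

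Finally, since $C$ is closed and $N$ is compact with $N\cap C=\emptyset$, the function $v\mapsto d(v,C)$ is continuous and strictly positive on $N$, so it attains a positive minimum $\eps_0:=\min_{v\in N}d(v,C)>0$. Taking $\delta^{\prime\prime}=\eps_0/2$ (or any value in $(0,\eps_0)$) gives $d(v,C)>\delta^{\prime\prime}$ for every $v\in\bigcup_{y\in K}(F_y\cap S)$, which is the desired statement.

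I do not anticipate a genuine obstacle here; the only point that deserves care is the standard fact that a compact set and a disjoint closed set have strictly positive separation, which would fail without compactness of $N$ — and that compactness is precisely what the continuity of the bundle $F$ together with the compactness of $K$ supplies.
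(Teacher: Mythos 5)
Your proposal is correct and follows essentially the same route as the paper: both establish compactness of $N=\bigcup_{y\in K}(F_y\cap S)$ by the argument of Lemma \ref{vector-in-C-farawayfrom-V1-delta}, note $N\cap C=\emptyset$ from property (iv), and then invoke the positive separation between a compact set and a disjoint closed set (the paper phrases this via a finite cover by balls missing $C$, you via the minimum of the continuous function $v\mapsto d(v,C)$; these are interchangeable).
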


\begin{proof} Let $N=\bigcup_{y\in K} (F_y\cap S)$. By repeating the same argument in the proof of Lemma \ref{vector-in-C-farawayfrom-V1-delta}, one can show that $N$ is compact.
Recall that $C$ is closed and $(F_y\cap S)\cap C=\emptyset$, for any $y\in K$. Then, for any $v\in N$, there exists an open ball $B_{\delta(v)}(v)$ with radius $\delta(v)>0$ such that $B_{\delta(v)}(v)\cap C=\emptyset$. Due to the compactness of $N$, we can obtain a $\delta^{\prime\prime}>0$ such that $d(v,C)>\delta^{\prime\prime}$ for any $v\in \cup_{y\in\omega(x)} (F_y\cap S)$, completing the proof.
\end{proof}

For each $y\in K$, we further denote by $P_{y}: X \longmapsto E_{y}$ the natural projection along $F_{y}$, and $Q_{y}=I-P_{y}$ as the projection onto $F_{y}$. We have the following properties for the projections:

\begin{lemma}\label{L:ES-Vs-Cone}
{\rm (i)} The projections $P_y$ and $Q_y$ are bounded uniformly for $y\in K$.

{\rm (ii)}  There exists a constant $C_1>0$ such that, if $v\in \mathbb{R}^n\setminus\{0\}$ satisfies $\norm{P_y(v)}\geq C_1 \norm{Q_y(v)}$ for some $y\in K$, then $v\in \text{Int}\,C$.

{\rm (iii)} For any $v\in C\setminus\{0\}$, there exists a constant $\delta_3>0$ such that $\norm{Q_y(v)}\leq \delta_3\norm{P_y(v)}$ for any $y\in K$.
\end{lemma}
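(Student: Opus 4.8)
The plan is to prove the three assertions of Lemma~\ref{L:ES-Vs-Cone} in order, each time exploiting the compactness of $K$ together with the continuity of the bundle maps $y\mapsto E_y$ and $y\mapsto F_y$ (in the gap metric on the relevant Grassmannians), which is exactly what was set up in Lemmas~\ref{vector-in-C-farawayfrom-V1-delta} and~\ref{ES-distance}.

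For part (i), the point is that the norm $\norm{P_y}$ of the projection onto $E_y$ along $F_y$ depends continuously on the pair $(E_y,F_y)$, because $E_y\cap F_y=\{0\}$ for every $y$ and the splitting angle is controlled by the gap metric. First I would note that the map $y\mapsto(E_y,F_y)\in G(k,\RR^n)\times G(n-k,\RR^n)$ is continuous and $K$ is compact, so $\{(E_y,F_y):y\in K\}$ is a compact subset of the (open) set of transverse pairs; since $(E,F)\mapsto\norm{P_{E,F}}$ is continuous there, it attains a finite maximum over $K$. The same applies to $Q_y=I-P_y$. Alternatively one can argue directly: if $\norm{P_{y_n}}\to\infty$ along some $y_n\to y$, pick unit vectors realizing (nearly) the norm, decompose and pass to subsequences using compactness of $E_y\cap S$ and $F_y\cap S$ to produce a nonzero vector in $E_y\cap F_y$, a contradiction. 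This is routine.

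For part (ii), set $C_1:=(\delta')^{-1}\cdot\sup_{y\in K}\norm{Q_y}$ (with a harmless extra factor if needed), where $\delta'$ is the constant from Lemma~\ref{vector-in-C-farawayfrom-V1-delta}; by part~(i) this $C_1$ is finite. Given $v\neq0$ with $\norm{P_y(v)}\ge C_1\norm{Q_y(v)}$, I would consider the unit vector $u:=v/\norm{P_y(v)}$; then $P_y(u)\in E_y$ has norm $1$, so $P_y(u)\in E_y\cap S$, while $\norm{Q_y(u)}=\norm{Q_y(v)}/\norm{P_y(v)}\le 1/C_1$. Hence $d(u,E_y\cap S)\le\norm{u-P_y(u)}=\norm{Q_y(u)}\le 1/C_1$; choosing $C_1$ large enough that $1/C_1\le\delta'$ (e.g. $C_1=\max\{1/\delta',\,\sup_y\norm{Q_y}\}$ suffices after a small adjustment) gives $u\in\tint C$ by Lemma~\ref{vector-in-C-farawayfrom-V1-delta}, and since $\tint C$ is a cone (closed under positive scaling, and indeed under all nonzero scaling by Definition~\ref{D:k-cone}(i)), $v=\norm{P_y(v)}\,u\in\tint C$.

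For part (iii), fix $v\in C\setminus\{0\}$; I claim the bound holds uniformly in $y$. Suppose not: then there is a sequence $y_n\in K$ with $\norm{Q_{y_n}(v)}>n\,\norm{P_{y_n}(v)}$. Passing to a subsequence, $y_n\to y\in K$, and by continuity of the projections $P_{y_n}(v)\to P_y(v)$, $Q_{y_n}(v)\to Q_y(v)$; since $v\neq0$ is fixed and $\norm{P_{y_n}(v)}\to\norm{P_y(v)}$, $\norm{Q_{y_n}(v)}\to\norm{Q_y(v)}$ are bounded, the inequality forces $\norm{P_y(v)}=0$, i.e. $v\in F_y$. But $v\in C\setminus\{0\}$ and $F_y\cap C=\{0\}$ (Definition~\ref{D:ES-separation}(iv)), a contradiction; so the uniform constant $\delta_3$ exists. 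The main obstacle, such as it is, is making sure the continuity of $y\mapsto P_y$ (equivalently of the transverse splitting) is cleanly justified from the continuity of the two bundles in the gap metric — once that is in hand, all three parts are short compactness-and-continuity arguments, with part~(ii) requiring just the correct bookkeeping of constants to line up $1/C_1$ with $\delta'$.
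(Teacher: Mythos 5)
Your arguments for (i) and (ii) are correct, but they follow a different route from the paper's. The paper proves (i) quantitatively: for $P_y(v)\neq 0$ it writes
$\frac{\norm{v}}{\norm{P_y(v)}}=\norm{\frac{Q_y(v)}{\norm{P_y(v)}}+\frac{P_y(v)}{\norm{P_y(v)}}}\ge d\bigl(\frac{Q_y(v)}{\norm{P_y(v)}},E_y\cap S\bigr)\ge\delta'$,
the last step because a vector of $F_y$ within distance $\delta'$ of $E_y\cap S$ would lie in $\tint C$ by Lemma \ref{vector-in-C-farawayfrom-V1-delta}, contradicting $F_y\cap C=\{0\}$; this gives the explicit bounds $\norm{P_y}\le 1/\delta'$ and $\norm{Q_y}\le 1+1/\delta'$. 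Your soft compactness/contradiction argument (a blow-up of $\norm{P_{y_n}}$ produces a nonzero vector in $E_y\cap F_y$) is also valid, relying only on the compactness of $\bigcup_y(E_y\cap S)$ and $\bigcup_y(F_y\cap S)$ already established in Lemmas \ref{vector-in-C-farawayfrom-V1-delta} and \ref{ES-distance}, but it is non-quantitative. For (ii) your version is in fact slightly cleaner than the paper's: applying Lemma \ref{vector-in-C-farawayfrom-V1-delta} directly to $u=v/\norm{P_y(v)}$, whose $E_y$-component is exactly a unit vector, shows any $C_1\ge 1/\delta'$ works, whereas the paper normalizes further to $w/\norm{w}$ and pays a factor $2$, taking $C_1=2/\delta'$. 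The extra factor $\sup_y\norm{Q_y}$ in your $C_1$ is unnecessary but harmless, and the degenerate case $P_y(v)=0$ is excluded exactly as in the paper.

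Part (iii) is where there is a genuine gap. Your contradiction argument fixes $v$ first and only then lets $y$ run over $K$, so it produces a constant $\delta_3=\delta_3(v)$ depending on $v$. Read against the letter of the statement this is defensible, but the lemma is used downstream (Lemma \ref{expand-in-Ez}, and the estimate \eqref{E:E-Projection-control} in the proof of Lemma \ref{Distance-nonlinear-system}) with a single $\delta_3$ applied to the varying vectors $y_{\tau_k}-x_{\tau_k}\in C\setminus\{0\}$, so uniformity in $v$ is essential. The paper obtains it in one line from Lemma \ref{ES-distance}: $\norm{P_y(v)}=\norm{v-Q_y(v)}\ge d(Q_y(v),C)\ge\delta''\norm{Q_y(v)}$, using the homogeneity of $C$ (hence of $d(\cdot,C)$) to reduce to $Q_y(v)/\norm{Q_y(v)}\in F_y\cap S$; thus $\delta_3=1/\delta''$ works simultaneously for all $v\in C\setminus\{0\}$ and all $y\in K$. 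Your argument can be repaired along the same compactness lines you use elsewhere: by homogeneity it suffices to bound the continuous function $(y,v)\mapsto\norm{Q_y(v)}/\norm{P_y(v)}$ on the compact set $K\times(C\cap S)$, where the denominator never vanishes since $P_y(v)=0$ would force $v\in F_y\cap C\cap S=\emptyset$; but as written the compactness is exploited only in the $y$-variable.
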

\begin{proof}
(i). For any $y\in K$ and $v\in \RR^n$, we write $v=P_y(v)+Q_y(v)$, where $P_y(v)\in E_y$ and $Q_y(v)\in F_y$. So, for any $v\in \RR^n$ with $P_y(v)\ne 0$, one has
$$\frac{\norm{v}}{\norm{P_y(v)}}=\norm{\frac{Q_y(v)}{\norm{P_y(v)}}
+\frac{P_y(v)}{\norm{P_y(v)}}}\ge d(\frac{Q_y(v)}{\norm{P_y(v)}}, E_y\cap S).$$
Note that $\frac{Q_y(v)}{\norm{P_y(v)}}\in F_y$. Then it follows from Lemma $\ref{vector-in-C-farawayfrom-V1-delta}$ and $F_y\cap C=\{0\}$ that $\frac{\norm{v}}{\norm{P_y(v)}}\ge \delta^{\prime}$. This implies that $ \norm{P_y}\le \frac{1}{\delta^{\prime}}$ for any $y\in K$. Hence, $\norm{Q_y}\le 1+\frac{1}{\delta^{\prime}}$ for any $y\in K$.

\vskip 2mm

(ii). Let $C_1=\dfrac{2}{\delta^{\prime}}$, where $\delta^\prime>0$ is defined as in (i) and Lemma $\ref{vector-in-C-farawayfrom-V1-delta}$. For any $v\in \mathbb{R}^n\setminus\{0\}$ satisfying $\norm{P_y(v)}\geq C_1 \norm{Q_y(v)}$ for some $y\in K$, it is clear that $P_y(v)\ne 0$ (otherwise, $v=0$, a contradiction). So, we define the nonzero vector
$w=v/\norm{P_{y}(v)}$, and obtain $$\begin{aligned}\norm{\frac{w}{\norm{w}}-\frac{P_{y}(v)}{\norm{P_{y}(v)}}}
&\leq\norm{\frac{w}{\norm{w}}-w}+\norm{\frac{Q_{y}(v)}{\norm{P_{y}(v)}}}\\
 &\leq\norm{w}\cdot\frac{\abs{1-\norm{w}}}{\norm{w}}+\frac{\norm{Q_{y}(v)}}{\norm{P_{y}(v)}}\\
 &\leq\abs{1-\norm{w}}+\frac{\norm{Q_{y}(v)}}{\norm{P_{y}(v)}}\leq 2\frac{\norm{Q_{y}(v)}}{\norm{P_{y}(v)}}.\end{aligned}$$
Noticing that $d(\frac{w}{\norm{w}},E_y\cap S)\leq\norm{\frac{w}{\norm{w}}-\frac{P_{y}(v)}{\norm{P_{y}(v)}}}$, we have $$d(\frac{w}{\norm{w}},E_y\cap S^1)\leq 2\frac{\norm{Q_{y}(v)}}{\norm{P_{y}(v)}}=\frac{2}{C_1}=\delta^\prime.$$
It then follows from Lemma $\ref{vector-in-C-farawayfrom-V1-delta}$ that $w\in \text{Int}\,C$. Therefore, $v\in\text{Int}\,C$.

(iii). Let $\delta_3=\dfrac{1}{\delta^{\prime\prime}}$, where $\delta^{\prime\prime}>0$ is defined as in Lemma \ref{ES-distance}.
Given any $v\in C\setminus\{0\}$ and any $y\in K$, we write $v=P_y(v)+Q_y(v)$. So, by
 Lemma $\ref{ES-distance}$, we have
 $$\norm{P_y(v)}=\norm{v-Q_y(v)}\ge d(Q_y(v),C)\ge \norm{Q_y(v)}\delta^{\prime\prime},$$ which entails that $\norm{Q_y(v)}\leq \delta_3\norm{P_y(v)}$. This completes the proof.
\end{proof}

\vskip 3mm

In the second part of this section, we will introduce {\it the $k$-Lyapunov exponent} for
$(\Phi_t,\,D\Phi_t)$ on $K$ which admits the $k$-exponential separation $K\times \RR^n=E\oplus F$, where $E=K \times (E_x)$ and $F=K \times (F_x)$.

For each $x\in K$, define {\it the $k$-Lyapunov exponent} as
\begin{equation}\label{D:k-Lyapu}
\lambda_{kx}=\limsup_{t\to +\infty}\dfrac{\log m(D_x\Phi_t|_{_{E_x}})}{t},
\end{equation}
where
$m(D_x\Phi_t|_{_{E_x}})=\inf\limits_{v\in E_x\cap S}\norm{D_x\Phi_tv}$ is
the infimum norm of $D_x\Phi_t$ restricted to $E_x$.
A point $x\in K$ is called {\it a regular point} if
$\lambda_{kx}=\lim\limits_{t\to +\infty}\dfrac{\log m(D_x\Phi_t|_{_{E_x}})}{t}$.
\vskip 3mm

\begin{lemma}\label{E-S-and-Lya-exponent} Let $x\in K$. Then

{\rm (i)} If $w\in F_x\setminus\{0\}$, then $\lambda(x,w)\leq \lambda_{kx}+\log(\gamma)$, where $\lambda(x,w)=\limsup_{t\to \infty}t^{-1}\log\norm{D_z\Phi_t v}.$

{\rm (ii)} Let $x$ be a regular point. If $\lambda_{kx}\leq 0$, then there exists a number $\beta\in (\gamma,1)$ satisfying the following property: For any $\epsilon>0$,  there is a constant $C_{\epsilon}>0$ such that $$\norm{D_{\Phi_{t_1} (x)}\Phi_{t_2}w}\leq C_{\epsilon}e^{\epsilon t_1}\beta^{t_2}\norm{w}$$ for any $w\in F_{\Phi_{t_1}(x)}\setminus\{0\}$ and $t_1,t_2>0$.
\end{lemma}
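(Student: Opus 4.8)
The plan is to derive both statements from the defining property (iii) of the $k$-exponential separation combined with the definition of $\lambda_{kx}$, handling the two fibre components (on $E$ versus on $F$) separately. For part (i): fix $w \in F_x \setminus \{0\}$ and any unit vector $v \in E_x \cap S$. By property (iii) of Definition \ref{D:ES-separation} applied at the point $x$, we have $\norm{D_x\Phi_t \frac{w}{\norm{w}}} \le M\gamma^t \norm{D_x\Phi_t v}$ for all $t \ge 0$, hence $\norm{D_x\Phi_t w} \le M\gamma^t \norm{w} \cdot \norm{D_x\Phi_t v}$. Taking the infimum over $v \in E_x \cap S$ on the right-hand side gives $\norm{D_x\Phi_t w} \le M\gamma^t \norm{w}\, m(D_x\Phi_t|_{E_x})$. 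Now take $t^{-1}\log$ of both sides and pass to $\limsup_{t\to\infty}$: the term $t^{-1}\log(M\norm{w})$ vanishes, $t^{-1}\log \gamma^t = \log\gamma$, and $\limsup_t t^{-1}\log m(D_x\Phi_t|_{E_x}) = \lambda_{kx}$ by definition \eqref{D:k-Lyapu}. This yields $\lambda(x,w) \le \lambda_{kx} + \log\gamma$, as claimed. (I note in passing there is a typo in the statement — it should read $\lambda(x,w) = \limsup_{t\to\infty} t^{-1}\log\norm{D_x\Phi_t w}$, with base point $x$ and vector $w$.)

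For part (ii), the subtlety is that we must estimate $D_{\Phi_{t_1}(x)}\Phi_{t_2}$ acting on the fibre $F_{\Phi_{t_1}(x)}$ — i.e., we need the bound uniformly along the forward orbit of $x$, not just at $x$ itself, and with the regularity of $x$ entering through the $E$-component. First, since $x$ is regular and $\lambda_{kx} \le 0$, for any $\eta > 0$ there is $C_\eta' > 0$ with $m(D_x\Phi_s|_{E_x}) \ge C_\eta' e^{-\eta s}$ for all $s \ge 0$ (this is just the statement that $s^{-1}\log m(D_x\Phi_s|_{E_x}) \to \lambda_{kx} \le 0$, so the quantity decays no faster than $e^{-\eta s}$). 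Combined with the cocycle identity $D_x\Phi_{t_1+t_2}|_{E_x} = (D_{\Phi_{t_1}(x)}\Phi_{t_2}|_{E_{\Phi_{t_1}(x)}}) \circ (D_x\Phi_{t_1}|_{E_x})$ and submultiplicativity of the infimum norm, $m(D_x\Phi_{t_1+t_2}|_{E_x}) \le m(D_{\Phi_{t_1}(x)}\Phi_{t_2}|_{E_{\Phi_{t_1}(x)}}) \cdot \norm{D_x\Phi_{t_1}|_{E_x}}$; here $\norm{D_x\Phi_{t_1}|_{E_x}}$ can be bounded by $C'' e^{\epsilon t_1/2}$ for an appropriate constant, since the top Lyapunov exponent on $E_x$ also equals $\lambda_{kx} \le 0$ at a regular point (the $k$-dimensional fibre $E_x$ has all its Lyapunov exponents equal, by the exponential-separation/Krein–Rutman-type structure, but even without that one gets a subexponential — hence $\le e^{\epsilon t_1/2}$ — bound from regularity). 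Rearranging, $m(D_{\Phi_{t_1}(x)}\Phi_{t_2}|_{E_{\Phi_{t_1}(x)}}) \ge \frac{m(D_x\Phi_{t_1+t_2}|_{E_x})}{\norm{D_x\Phi_{t_1}|_{E_x}}} \ge \frac{C_\eta' e^{-\eta(t_1+t_2)}}{C'' e^{\epsilon t_1/2}}$. Then apply property (iii) of Definition \ref{D:ES-separation} at the point $\Phi_{t_1}(x)$: for $w \in F_{\Phi_{t_1}(x)}\setminus\{0\}$ and any $v \in E_{\Phi_{t_1}(x)} \cap S$, $\norm{D_{\Phi_{t_1}(x)}\Phi_{t_2} w} \le M\gamma^{t_2}\norm{w}\,\norm{D_{\Phi_{t_1}(x)}\Phi_{t_2} v}$, but I want to go the other way — I should instead bound $\norm{D_{\Phi_{t_1}(x)}\Phi_{t_2}w}$ directly. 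Reconsidering: the clean route is to use (iii) in the form that compares the $F$-growth to the $E$-infimum growth \emph{from the same base point}, giving $\norm{D_{\Phi_{t_1}(x)}\Phi_{t_2}w} \le M\gamma^{t_2}\norm{w}\,m(D_{\Phi_{t_1}(x)}\Phi_{t_2}|_{E_{\Phi_{t_1}(x)}})$... no — that inequality runs the wrong direction for an upper bound. The correct approach: part (iii) says $F$-vectors grow at most $M\gamma^{t_2}$ times as fast as \emph{any} $E$-vector; so $\norm{D_{\Phi_{t_1}(x)}\Phi_{t_2}w} \le M\gamma^{t_2}\norm{w}\cdot \norm{D_{\Phi_{t_1}(x)}\Phi_{t_2}|_{E}}$ is false, but $\norm{D_{\Phi_{t_1}(x)}\Phi_{t_2}(w/\norm w)} \le M\gamma^{t_2}\norm{D_{\Phi_{t_1}(x)}\Phi_{t_2}v}$ for the \emph{particular} minimizing $v$, and $\norm{D_{\Phi_{t_1}(x)}\Phi_{t_2}v_{\min}} = m(D_{\Phi_{t_1}(x)}\Phi_{t_2}|_E)$ would still be an \emph{upper} bound on the left only if the minimizing $v$ itself is bounded — which it is ($\norm{v_{\min}}=1$) — wait, the issue is whether $m$ is small; since we want an \emph{upper} bound on the $F$-component we actually want $m$ to be upper-bounded, i.e. we need the $E$-growth to not be too large, which again follows from regularity and $\lambda_{kx} \le 0$: $m(D_{\Phi_{t_1}(x)}\Phi_{t_2}|_E) \le \norm{D_{\Phi_{t_1}(x)}\Phi_{t_2}|_E} \le$ (subexponential in $t_1+t_2$, then split off $t_1$ via the cocycle and the uniform boundedness of $D\Phi$ on the compact orbit closure) $\le C_\epsilon e^{\epsilon t_1} \beta^{t_2}$ for suitable $\beta \in (\gamma,1)$ and $C_\epsilon$. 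Folding the factor $M\gamma^{t_2}$ into $\beta^{t_2}$ (choosing $\beta$ with $\gamma < \beta < 1$ absorbs the polynomial/subexponential slack) and collecting constants yields the asserted estimate $\norm{D_{\Phi_{t_1}(x)}\Phi_{t_2}w} \le C_\epsilon e^{\epsilon t_1}\beta^{t_2}\norm{w}$.

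The main obstacle is the bookkeeping in part (ii): one must be careful that the factor $e^{\epsilon t_1}$ — and not, say, $e^{\epsilon(t_1+t_2)}$ — multiplies the estimate, which forces the splitting of the cocycle at time $t_1$ and the use of regularity \emph{at the base point $x$} rather than at $\Phi_{t_1}(x)$ (where no regularity is assumed). The role of $\beta \in (\gamma,1)$ is precisely to have room to absorb the constant $M$, the subexponential corrections coming from regularity, and any uniform constants from the compactness of $K$ and the uniform bounds on $D\Phi_t$ over bounded time intervals; since $\gamma < 1$ strictly, such a $\beta$ exists, and it can be chosen independently of $\epsilon$ (only $C_\epsilon$ depends on $\epsilon$). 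Everything else — the cocycle identity, submultiplicativity of the infimum norm, and the passage between $\limsup$ characterizations and two-sided exponential bounds at regular points — is standard and can be invoked directly.
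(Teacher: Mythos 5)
Part (i) of your proposal is correct and is exactly the paper's argument (and your typo note about $\lambda(x,w)$ is right). Part (ii), however, has a genuine gap at its central step. After correctly reducing to the estimate $\norm{D_{\Phi_{t_1}(x)}\Phi_{t_2}w}\le M\gamma^{t_2}\norm{w}\cdot m\bigl(D_{\Phi_{t_1}(x)}\Phi_{t_2}|_{E_{\Phi_{t_1}(x)}}\bigr)$, you need an \emph{upper} bound on $m\bigl(D_{\Phi_{t_1}(x)}\Phi_{t_2}|_{E_{\Phi_{t_1}(x)}}\bigr)$. The inequality you write down, $m(D_x\Phi_{t_1+t_2}|_{E_x})\le m(D_{\Phi_{t_1}(x)}\Phi_{t_2}|_{E_{\Phi_{t_1}(x)}})\cdot\norm{D_x\Phi_{t_1}|_{E_x}}$, runs the wrong way (it yields a lower bound on the quantity you need to bound from above), and your fallback — dominating $m$ by the operator norm $\norm{D_{\Phi_{t_1}(x)}\Phi_{t_2}|_{E}}$ and claiming this is subexponential (or even $\le C_\epsilon e^{\epsilon t_1}\beta^{t_2}$ with $\beta<1$) — is not justified by the hypotheses. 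Regularity of $x$ in this paper controls only the \emph{infimum} norm $m(D_x\Phi_t|_{E_x})$; for $k\ge 2$ the fibre $E_x$ may well contain directions with large positive Lyapunov exponents, so $\norm{D_x\Phi_t|_{E_x}}$ can grow exponentially even when $\lambda_{kx}\le 0$. Your parenthetical claim that all Lyapunov exponents on $E_x$ coincide is likewise unsupported: the $k$-exponential separation separates $E$ from $F$ but says nothing about the spectrum inside $E$.

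The correct step, which is what the paper does, is the \emph{super}multiplicativity of the infimum norm along the cocycle, $m(D_x\Phi_{t_1+t_2}|_{E_x})\ge m(D_{\Phi_{t_1}(x)}\Phi_{t_2}|_{E_{\Phi_{t_1}(x)}})\cdot m(D_x\Phi_{t_1}|_{E_x})$, which gives
$m(D_{\Phi_{t_1}(x)}\Phi_{t_2}|_{E_{\Phi_{t_1}(x)}})\le m(D_x\Phi_{t_1+t_2}|_{E_x})/m(D_x\Phi_{t_1}|_{E_x})$,
a ratio of two quantities both controlled two-sidedly by the regularity of $x$: for $t_1\ge T_1^\epsilon$ it is at most $e^{\lambda_{kx}t_2}e^{\epsilon t_1}e^{\epsilon t_2/2}\le e^{\epsilon t_1}e^{\epsilon t_2/2}$. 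Note this bound still \emph{grows} in $t_2$; the decay $\beta^{t_2}$ only appears after multiplying by the separation factor $M\gamma^{t_2}$ and requiring $e^{\epsilon/2}\gamma\le\beta$ (so $\epsilon$ must be restricted relative to $\log(\beta/\gamma)$, with larger $\epsilon$ handled by monotonicity of $e^{\epsilon t_1}$). Your treatment of the initial segment $t_1\in[0,T_1^\epsilon]$ via compactness and uniform bounds on $D\Phi$ over bounded time intervals is the right idea and matches the paper.
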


\begin{proof} (i).  Since $(\Phi_t, D\Phi_t)$ admits a $k$-dimensional exponential separation $K\times \RR^n=E\oplus F$, we have $$\begin{aligned} \norm{D_x\Phi_tw}\leq M\gamma^t\frac{\norm{w}}{\norm{v}}\norm{D_x\Phi_tv}   \end{aligned}$$ for any $w\in F_x\setminus\{0\}$, $v\in E_x\setminus\{0\}$ and $t>0$, which implies $\norm{D_x\Phi_tw}\leq M\gamma^t\norm{w}\cdot m(D_x\Phi_t\mid_{_{E_x}})$. Therefore, $$ \begin{aligned} \lambda(x,w)&=\limsup\limits_{t\to \infty}\frac{\log\norm{D_x\Phi_tw}}{t}\leq\limsup\limits_{t\to\infty}
\frac{\log(M\gamma^t\norm{w}\cdot m(D_x\Phi_t\mid_{E_x}))}{t}\\ &= \limsup\limits_{t\rightarrow+\infty}\frac{\log m(D_x\Phi_t\mid_{E_x})}{t}+\log(\gamma)\leq \lambda_{kx}+\log(\gamma).\end{aligned}$$

(ii). Take a number $\beta\in (\gamma,1)$ and fix any $\epsilon\in (0,\log(\frac{\beta}{\gamma})]$. Since the point $x$ is regular,  for any $\eps>0$, there exists some $T^{\epsilon}_1>0$ such that $$ \lambda_{kx}-\frac{\eps}{2}\leq \frac{\log m(D_x\Phi_t\mid_{E_x})}{t} \leq\lambda_{kx}+\frac{\eps}{2}$$ for any $t\ge T^{\epsilon}_1$; and hence, $$\frac{m(D_x\Phi_{t_1+t_2}\mid_{E_x})}{m(D_x\Phi_{t_1}\mid_{E_x})}\leq e^{\lambda_{kx}t_2}e^{\epsilon t_1}e^{\frac{\epsilon}{2} t_2}$$ for any $t_1\ge T^{\epsilon}_1$ and $t_2\ge 0$. Note also that
$$m(D_x\Phi_{t_1+t_2}\mid_{E_{x}})\ge m(D_{\Phi_{t_1}(x)}\Phi_{t_2}\mid_{E_{\Phi_{t_1}(x)}})\cdot m(D_x\Phi_{t_1}\mid_{E_{x}}),\,\,\,\,\text{ for } t_1,t_2\ge 0.$$
Then, together with the $k$-exponentially separated property, we have
$$\begin{aligned} \norm{D_{\Phi_{t_1}(x)}\Phi_{t_2}w}&\leq M\gamma^{t_2}\norm{w}\cdot m(D_{\Phi_{t_1}(x)}\Phi_{t_2}\mid_{E_{\Phi_{t_1}(x)}})\\
&\leq M\gamma^{t_2}\norm{w}\cdot\frac{m(D_x\Phi_{t_1+t_2}\mid_{E_{x}})}{m(D_x\Phi_{t_1}\mid_{E_{x}})}
\leq M\gamma^{t_2}\norm{w}e^{\lambda_{kx}t_2}e^{\epsilon t_1}e^{\frac{\epsilon}{2} t_2}\end{aligned}$$ for any $t_1\ge T^{\epsilon}_1$, $t_2\ge 0$ and $w\in F_{\Phi_{t_1}(x)}\setminus\{0\}$. Recall that $\lambda_{kx}\le 0$. Then
$\norm{D_{\Phi_{t_1} (x)}\Phi_{t_2}w}\le Me^{\epsilon t_1}(e^{\frac{\epsilon}{2}}\gamma)^{t_2}\norm{w}.$ Then, we obtain
\begin{equation}\label{E:Lypa-growth-control}
\norm{D_{\Phi_{t_1} (x)}\Phi_{t_2}w}\leq Me^{\epsilon t_1}\beta^{t_2}\norm{w},\,\,\,\,\text{ for any } t_1\ge T^{\epsilon}_1,\, t_2\ge 0\,\text{ and } w\in F_{\Phi_{t_1}(x)}\setminus\{0\}.
\end{equation}
Now, for $t_1\in [0,T^{\epsilon}_1]$, we define $\chi_{\epsilon}^1=\max\limits_{\substack{0\leq t_1\leq T^{\epsilon}_1\\0\leq t_2\leq T^{\epsilon}_1-t_1}}\{\norm{D_{\Phi_{t_1}(x)}\Phi_{t_2}}\}$. By the smoothness of $\Phi_t$,  $\chi_{\epsilon}^1<+\infty$. Let also $\chi_{\epsilon}^2=\chi^1_{\epsilon}\cdot\max\limits_{\substack{0\leq t_1\leq T^{\epsilon}_1\\0\leq t_2\leq T^{\epsilon}_1-t_1}}\{e^{-\epsilon t_1}\beta^{-t_2}\}$ and $\chi_{\epsilon}^3=M\chi_{\epsilon}^1(e^{\epsilon}\gamma^{-1})^{T^{\epsilon}_1}$. Then, the following properties hold:

$(P_1)$ $\norm{D_{\Phi_{t_1}(x)}\Phi_{t_2}w}\leq \chi^2_{\epsilon}e^{\epsilon t_1}\beta^{t_2}\norm{w}$ for any $0\leq t_1\leq T^{\epsilon}_1$, $0\leq t_2\leq T^{\epsilon}_1-t_1$ and $w\in F_{\Phi_{t_1}(x)}\setminus\{0\}$;

$(P_2)$ For any $0\leq t_1\leq T^{\epsilon}_1$, $t_2>T^{\epsilon}_1-t_1$ and $w\in F_{\Phi_{t_1}(x)}\setminus\{0\}$, one has $$\begin{aligned}\norm{D_{\Phi_{t_1}(x)}\Phi_{t_2}w}&=
\norm{D_{\Phi_{T^{\epsilon}_1}(x)}\Phi_{t_1+t_2-T^{\epsilon}_1}D_{\Phi_{t_1}(x)}\Phi_{T^{\epsilon}_1-t_1}w}\leq \chi^3_{\epsilon}e^{\epsilon t_1}\beta^{t_2}\norm{w}.\end{aligned}$$

\noindent Therefore, combining with \eqref{E:Lypa-growth-control}, we obtain that $\norm{D_{\Phi_{t_1} (x)}\Phi_{t_2}w}\leq \max\{M,\chi^2_{\epsilon},\chi^{3}_{\epsilon}\}\cdot e^{\epsilon t_1}\beta^{t_2}\norm{w}$ for any $w\in F_{\Phi_{t_1}(x)}\setminus\{0\}$ and $t_1,t_2>0$.  The proof is complete with  \begin{equation}
C_{\epsilon} = \begin{cases}
\max\{M,\chi^2_{\epsilon},\chi^{3}_{\epsilon}\}, & \epsilon\in(0,\log(\frac{\beta}{\gamma})) \\
\max\{M,\chi^2_{\log(\frac{\beta}{\gamma})},\chi^{3}_{\log(\frac{\beta}{\gamma})}\}, & \epsilon\in[\log(\frac{\beta}{\gamma}),\infty).
\end{cases}
\end{equation}
\end{proof}

\begin{rem}
{\rm When $k=1$ in \eqref{D:k-Lyapu}, $\lambda_{kx}$ naturally reduces to $\lambda_{1x}=\limsup\limits_{t\to +\infty}\dfrac{\log \norm{D_x\Phi_tv_x}}{t}$, where $\norm{v_x}=1$ with $E_x={\rm span}\{v_x\}.$ In general, $\lambda_{1x}$ is referred as {\it the first (or principal) Lyapunov exponent} (see, for example, {\rm \cite{PT1}}).
}
\end{rem}

\section{$k$-Stability and Pseudo-ordered Orbits}

We will investigate in this section the generic behavior of the orbits for the smooth flow $\Phi_t$ under the fundamental assumption {\bf (FWW)}.

For this purpose, throughout this section we fix an open set $D\subset \RR^n$ that is $\omega$-compact, and focus on the types of the orbits starting from $D$. For any $x\in D$, $(\Phi_t,\,D\Phi_t)$ admits a $k$-exponential continuous separation  $\omega(x)\times \RR^n=E\oplus F$ with respect to $C$. Motivated by Pol\'{a}\v{c}ik and Tere\v{s}\v{c}\'ak \cite{PT1}, we will classify the dynamics for the orbit of $x$ according to
 the $k$-Lyapunov exponents on $\omega(x)$.
To be more specific, we firstly show that if $\omega(x)$ contains a regular point $z$ such that $\lambda_{kz}\le 0$, then either $x\in Q$ or $\omega(x)$ is a singleton (see Theorem \ref{omega-lambdak<=0}). Secondly, we prove that if $\lambda_{kz}>0$ for any $z\in \omega(x)$, then $x$ is highly unstable (see Lemma \ref{Distance-nonlinear-system}), and meanwhile, it belongs to the closure $\overline{Q}$ (see Theorem \ref{omega-lambdak>0}). Finally, we show that if $\lambda_{k\tilde{z}}>0$ for any regular points $\tilde{z}\in \omega(x)$ but there is an irregular point $z$ with $\lambda_{kz}\leq 0$, then $x\in \overline{Q}$ (see Theorem \ref{regular-lambdak>0}).

\vskip 2mm
We begin with the following lemma, which extends the nonlinear dynamics nearby an equilibrium, the  linearization of which is governed by the Perron-Frobenius Theorem, to that nearby a regular point.

\begin{lemma}\label{oribit-lambdak<0} Let $x$ be a regular point. If $\lambda_{kx}\leq 0$, then there exists an open neighborhood $\mathcal{V}$ of $x$ such that for any $y\in \mathcal{V}$, one of two following properties holds:
\vskip 1mm
$(a)$ $\norm{\Phi_{t}(x)-\Phi_{t}(y)}\rightarrow 0$ as $t\rightarrow +\infty$;

\vskip 2mm
$(b)$ There exists $T>0$ such that $\Phi_{T}(x)-\Phi_{T}(y)\in  C$; an hence, $\Phi_{t}(x)-\Phi_{t}(y)\in \text{Int}\, C$ for any $t>T$.
\end{lemma}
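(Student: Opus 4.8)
The plan is to exploit the $k$-exponential separation along $\omega(x)$ together with the decay estimate for the $F$-bundle furnished by Lemma~\ref{E-S-and-Lya-exponent}(ii), and then transfer the linear picture to the nonlinear flow via the local $C^{1,\alpha}$-smoothness of $\Phi_t$. Fix the regular point $x$ with $\lambda_{kx}\le 0$, choose $\beta\in(\gamma,1)$ and $\epsilon>0$ small (to be constrained later, say $e^{\epsilon}\beta<1$), and let $C_\epsilon$ be the constant from Lemma~\ref{E-S-and-Lya-exponent}(ii). For $y$ near $x$ write $w(t)=\Phi_t(y)-\Phi_t(x)$ and decompose $w(t)=P_{\Phi_t(x)}w(t)+Q_{\Phi_t(x)}w(t)$ along the bundles $E,F$ at the point $\Phi_t(x)\in\omega(x)$ (using that $\omega(x)$ is compact and invariant, so the separation of Proposition~\ref{P:Cones-imply-ES} applies, and that the projections $P,Q$ are uniformly bounded by Lemma~\ref{L:ES-Vs-Cone}(i)). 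The variation-of-constants idea is that $w$ satisfies $\dot w = D_{\Phi_t(x)}\Phi\cdot w + (\text{H\"older-small remainder})$; as long as $y$ stays in a small tube around $O(x)$, the $F$-component is contracted by the factor $\beta^{t}$ (up to the $e^{\epsilon t_1}$ prefactor, which is harmless once $\epsilon$ is small relative to $-\log\beta$), while the $E$-component is at worst governed by $m(D_x\Phi_t|_{E_x})$, whose exponential rate is $\lambda_{kx}\le 0$.

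The key dichotomy is then: either the $E$-component $\|P_{\Phi_t(x)}w(t)\|$ also stays small (in fact $\to 0$) for all time — this should give alternative $(a)$, $\|w(t)\|\to 0$ — or at some time $t_0$ the ratio $\|P_{\Phi_{t_0}(x)}w(t_0)\|/\|Q_{\Phi_{t_0}(x)}w(t_0)\|$ exceeds the threshold $C_1$ of Lemma~\ref{L:ES-Vs-Cone}(ii), forcing $w(t_0)=\Phi_{t_0}(y)-\Phi_{t_0}(x)\in\mathrm{Int}\,C$, i.e. $\Phi_{t_0}(x)\sim\Phi_{t_0}(y)$. In the second case, setting $T=t_0$ gives the first half of $(b)$; the ``and hence'' part is immediate from strong monotonicity in assumption {\bf(FWW)} (if $\Phi_T(x)-\Phi_T(y)\in C\setminus\{0\}$ then $\Phi_t(x)-\Phi_t(y)=\Phi_{t-T}(\Phi_T(x))-\Phi_{t-T}(\Phi_T(x)-w)\approx$, more precisely $D_x\Phi_s(C\setminus\{0\})\subset\mathrm{Int}\,C$ applied along the orbit, or directly the strong monotonicity clause $\Phi_t(u)\approx\Phi_t(v)$ whenever $u\ne v,\ u\sim v,\ t>0$). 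To make the first case actually yield $(a)$ rather than just ``small'', I would run a standard contraction/Gronwall bootstrap: pick the neighborhood $\mathcal V$ so small that the H\"older remainder term, which is $O(\|w\|^{1+\alpha})$, is dominated by the gap between the $E$-growth rate and $1$; because $\lambda_{kx}\le 0$ the unstable directions give no expansion, so if $w$ never triggers the cone threshold its $F$-part decays geometrically and its $E$-part cannot grow, and feeding this back into the remainder estimate shows $\|w(t)\|\to 0$.

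The main obstacle I anticipate is handling the borderline rate $\lambda_{kx}=0$ (or more generally $\lambda_{kx}\le 0$ with the $E$-component neither expanding nor contracting): one cannot simply quote a uniform exponential contraction on $E$, so the bootstrap that gives genuine convergence $\|w(t)\|\to 0$ in case $(a)$ has to be done carefully — essentially one shows that the $F$-component decays fast enough that the accumulated nonlinear error stays summable, and that the $E$-component, driven only by this summable error, therefore also tends to $0$. A secondary technical point is that the separation bundles live over the moving base point $\Phi_t(x)$ rather than over a fixed point, so all estimates must be phrased with the ``shifted'' version of Lemma~\ref{E-S-and-Lya-exponent}(ii) (which is exactly why that lemma is stated with the two-parameter bound $\|D_{\Phi_{t_1}(x)}\Phi_{t_2}w\|\le C_\epsilon e^{\epsilon t_1}\beta^{t_2}\|w\|$); keeping track of the $e^{\epsilon t_1}$ prefactors while still closing the Gronwall loop is the place where the argument is most delicate. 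Everything else — uniform boundedness of projections, compactness of $\omega(x)$, the cone characterization Lemma~\ref{L:ES-Vs-Cone}(ii)–(iii), and the propagation of the order relation forward in time — is routine given the tools already established.
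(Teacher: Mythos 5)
Your overall skeleton matches the paper's: reduce to the difference $z_n=\Phi_n(y)-\Phi_n(x)$ satisfying $z_{n+1}=D_{x_n}\Phi_1 z_n+g_n(z_n)$ with $g_n(z)=O(\norm{z}^{1+\alpha})$, project along the exponential separation, use Lemma \ref{E-S-and-Lya-exponent}(ii) for the $F$-contraction with the $C_\eps e^{\eps t_1}\beta^{t_2}$ prefactor, and dichotomize according to whether the threshold $\norm{P z}\ge C_1\norm{Q z}$ of Lemma \ref{L:ES-Vs-Cone}(ii) is ever reached (which yields $(b)$, with the ``hence'' part from strong monotonicity, exactly as you say). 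However, your plan for closing case $(a)$ has a genuine gap. You propose to show that the $F$-component decays, that the accumulated nonlinear error is summable, and that the $E$-component, ``driven only by this summable error, therefore also tends to $0$.'' This last step fails for two reasons. First, $\lambda_{kx}\le 0$ is a statement about the \emph{infimum} norm $m(D_x\Phi_t|_{E_x})$; it gives a lower bound on the growth of vectors in $E_x$ and provides no upper bound whatsoever on $\norm{D_x\Phi_t|_{E_x}}$, so your assertions that ``the unstable directions give no expansion'' and that the $E$-part ``cannot grow'' are unjustified — the linear dynamics on $E$ may well be expanding. Second, even if the restriction to $E$ were exactly neutral, a neutral linear recursion forced by a summable perturbation produces a convergent, generally nonzero, limit for the $E$-component, not convergence to $0$; so the conclusion $\norm{w(t)}\to 0$ would not follow from your bootstrap.

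The missing idea — and the way the paper closes case $(a)$ — is that one never propagates the $E$-component through the dynamics at all. If the cone threshold is never reached, then by hypothesis $\norm{P_{x_n}z_n}< C_1\norm{Q_{x_n}z_n}$ for \emph{every} $n$, so $\norm{z_n}\le(1+C_1)\norm{Q_{x_n}z_n}$ and $\norm{g_n(z_n)}\le C_2(1+C_1)^{1+\alpha}\norm{Q_{x_n}z_n}^{1+\alpha}$; the entire bootstrap is then run on the $Q$-component alone, via the discrete variation-of-constants formula $Q_{x_n}z_n=T(n,0)Q_{x_0}z_0+\sum_{k=0}^{n-1}T(n,k+1)Q_{x_{k+1}}g_k(z_k)$ and an induction giving $\norm{Q_{x_n}z_n}\le\rho\eta^n$ for suitable $\beta<\eta<1$ with $e^{\eps}\eta^{\alpha}<1$. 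Convergence of $z_n$ (hence of $\Phi_t(y)-\Phi_t(x)$ after interpolating over $t\in[n,n+1]$) is then automatic. A secondary slip: the bundles must be taken over the compact invariant set $\overline{O(x)}$ rather than over $\omega(x)$, since the base points $\Phi_t(x)$ at which you project generally do not lie in $\omega(x)$.
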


\begin{proof} In order to prove this lemma, we first assert that there exists an open neighborhood $\mathcal{V'}$ of $x$ such that for any $y\in \mathcal{V'}$, one of two following properties must hold:
\vskip 1mm
 $(a')$ $\norm{\Phi_{n}(x)-\Phi_{n}(y)}\rightarrow 0$ as $n\rightarrow +\infty$;
  \vskip 2mm
 $(b')$ There exists $N\in \mathbb{N}$ such that $\Phi_{N}(x)-\Phi_{N}(y)\in \text{Int}\,C$.

 Before proving this assertion, we will show how it implies this lemma. In fact, fix any $y\in \mathcal{V}$ and suppose $(b)$ does not hold. Then $(b^\prime)$ does not hold for $y$. By virtue of the assertion, one has $\lim\limits_{n\rightarrow +\infty} \norm{\Phi_n(x)-\Phi_n(y)}=0$ as $n\rightarrow +\infty$. For any $t\geq 0$, we write $t=n+\tau$ with $n\in \mathbb{N}$ and $\tau\in[0,1)$. Then $$\norm{\Phi_t(x)-\Phi_t(y)}=\norm{\Phi_{\tau}\circ\Phi_{n}(x)-\Phi_{\tau}\circ\Phi_{n}(y)}
 \leq\sup\limits_{s\in[0,1]}\norm{D\Phi_{s}(\mathcal{B})}\cdot\norm{\Phi_{n}(x)-\Phi_{n}(y)}.$$
 Here, $\mathcal{B}=\{z\in \mathbb{R}^n:\,\norm{z}\leq 1\}$. Therefore, one has $\norm{\Phi_t(x)-\Phi_t(y)}\rightarrow 0$ as $n\rightarrow \infty$, that is, $(a)$ holds. Then, we complete the proof of this lemma.

It remains to prove the assertion. Recall that the linear skew product flow $(\Phi,\,D\Phi)$ on $\overline{O(x)}\times \mathbb{R}^n$ admits $k$-exponential continuous separation as $\overline{O(x)}\times\mathbb{R}^n=\overline{O(x)}\times(E_y)\oplus \overline{O(x)}\times(F_y)$ with $E_y\subset \text{Int}\,C\cup \{0\}$ and $F_y\cap C=\{0\}$ for any $y\in\overline{O(x)}$. Let $P_y$ be the continuous projection onto $E_y$ along $F_y$ and $Q_y=I-P_y$.

\vskip 2mm

For brevity, we write $x_n=\Phi_{n}(x)$, $y_n=\Phi_{n}(y)$ and $g_n(z)=\Phi_{1}(x_n+z)-\Phi_{1}(x_n)-D_{x_n}\Phi_{1}z$. Recall that $\Phi_{t}$ is $C^{1,\alpha}$ and $\overline{O(x)}$ is bounded. Then $$g_{n}(z)=O(\norm{z}^{1+\alpha})$$ as $z\rightarrow 0$ uniformly for $n\in \mathbb{N}$.  Let $z_{n}=y_n-x_n$, then $$z_{n+1}=D_{x_n}\Phi_{1}z_{n}+g_{n}(z_n).$$

\vskip 2mm
By virtue of Lemma \ref{L:ES-Vs-Cone} (ii), the assertion follows from the following statement: There exists an open neighborhood $\mathcal{W}$ of $0$ such that if $z_0\in \mathcal{W}$, then either

$(a^{\prime\prime})$ $z_n\rightarrow 0$ as $n\rightarrow +\infty$; or else

$(b^{\prime\prime})$ There is an $n$ such that $\norm{P_{x_n}z_n}\geq C_1 \norm{Q_{x_n}z_n}$.\\ Here, $C_1$ is defined in Lemma $\ref{L:ES-Vs-Cone}$ (ii).

To prove this statement and the existence of $\mathcal{W}$, we assume Case $(b^{\prime\prime})$ does not hold for any $n\geq 0$, which means $C_1 \norm{Q_{x_n}z_n}\geq \norm{P_{x_n}z_n}$ for any $n\geq 0$. By utilizing $\lambda_{kx}\leq 0$, we will show that $Q_{x_n}z_n\rightarrow 0$ if $z\in \mathcal{W}\triangleq\{z\in \mathbb{R}^n: \,\,\norm{Q_0z_0}<\xi\}$ with $\xi>0$ sufficiently small to be defined later. This then implies $z_n\rightarrow 0$ as $n\rightarrow +\infty$.

For brevity, we hereafter rewrite $D_{x_n}\Phi_{1}$, $P_{x_n}$ and $Q_{x_n}$ as $D_n$, $P_n$ and $Q_n$, respectively. By the invariance of exponential separation, we have $D_{n}P_n=P_{n+1}D_{n}$ and $D_{n}Q_n=Q_{n+1}D_{n}$. So $$P_{n+1}z_{n+1}=D_nP_nz_n+P_{n+1}g_n(z_n)$$ and $$Q_{n+1}z_{n+1}=D_nQ_nz_n+Q_{n+1}g_n(z_n)$$ for any $n\geq 0$. It then follows that $$Q_{n}z_n=T(n,0)Q_0z_0+\Sigma_{k=0}^{n-1}T(n,k+1)Q_{k+1}g_{k}(z_k)$$ for $n\in \mathbb{N}^+$, where $T(n,m)=D_{n-1}\circ D_{n-2}\circ\cdots\circ D_m$ for $n>m$ and $T(m,m)=id_{X}$.

Recall that $g_{n}(z)=\text{O}(\norm{z}^{1+\alpha})$ as $z\rightarrow 0$ uniformly for $n=0,1,2,\,\cdots$. Then there exist constants $r>0$ and $C_2>0$ such that $\norm{g_n(z)}\leq C_2\norm{z}^{1+\alpha}$ for any $n\geq 0$ if $\norm{z}<r$. Together with $C_1\norm{Q_nz_n}\geq \norm{P_nz_n}$, one has that $\norm{z_n}\leq r$ if $\norm{Q_nz_n}\leq\frac{r}{1+C_1}$. Hence, one has that \begin{eqnarray}\label{Tailterm}\norm{g_{n}(z_n)}\leq C_2\norm{z_n}^{1+\alpha}\leq C_2(1+C_1)^{1+\alpha}\norm{Q_nz_n}^{1+\alpha}\end{eqnarray} whenever $\norm{Q_nz_n}\leq\frac{r}{1+C_1}$ for $n\geq 0$.

Recall that $x$ is a regular point with $\lambda_{kx}\leq 0$. Then Lemma \ref{E-S-and-Lya-exponent}(ii) yields that there exists a constant $\beta\in (\gamma,1)$ such that for any $\epsilon>0$, we can find a number $C_{\epsilon}>0$ satisfying $$\norm{T(n,k)w}\leq C_{\epsilon}e^{\epsilon k}\beta^{n-k}\norm{w}$$ for any $w\in F_{x_k}$ with $n\geq k\geq 0$.

Without loss of generality, we assume $C_{\epsilon}\geq1$ and take $\epsilon>0$, $\eta\in (\beta,1)$ such that $e^{\epsilon}\eta^{\alpha}<1$. By Lemma \ref{L:ES-Vs-Cone}(i), one can find an upper bound $C_3>0$ such that $\norm{Q_n}\leq C_3$ for any $n\geq 0$. Let $C_4=C_{\epsilon}(1+C_1)^{1+\alpha}C_2C_3e^{\epsilon}\eta^{-1}$ and define the following neighborhood of $0$ $$\mathcal{W}=\{z\in \mathbb{R}^n:\,\norm{Q_0z_0}<\xi\},$$ where $\xi=\dfrac{\rho}{2C_{\epsilon}}$ with $0<\rho\leq\min\{\frac{r}{1+C_1},\,(\frac{\eta-\beta}{2C_4\eta})^{-\alpha}\}$.\\

We now claim that $\norm{Q_{n}z_n}\leq \rho\eta^{n}$ for any $n\geq0$, where $z_0\in \mathcal{W}$. Clearly, $\norm{Q_0z_0}\leq \frac{\rho}{2C_{\epsilon}}< \rho$. We will prove the claim by induction. Suppose that $\norm{Q_kz_k}\leq\rho\eta^k\leq\rho$ for any $k=0,1,\cdots,n-1$, it suffices to prove $\norm{Q_nz_n}\leq \rho\eta^n$. Noticing that $\rho\leq\frac{r}{1+C_1}$. The inequality (\ref{Tailterm}) yields that $\norm{g_k(z_k)}\leq C_2(1+C_1)^{1+\alpha}\norm{Q_kz_{k}}^{1+\alpha}$ for any $0\leq k\leq n-1$. Consequently,
$$\begin{aligned}\norm{Q_nz_{n}}&\leq \norm{T(n,0)Q_0z_0}+\Sigma_{k=0}^{n-1}\norm{T(n,k+1)Q_{k+1}g_{k}(z_k)}\\
&\leq C_{\epsilon}\beta^n\norm{Q_0z_0}+\Sigma_{k=0}^{n-1}C_{\epsilon}e^{\epsilon(k+1)}\beta^{n-k-1}\norm{Q_{k+1}}\norm{g_{k}(z_k)}\\
&\leq\frac{\beta^n}{2}\rho+C_{\epsilon}(\sup \limits_{0\leq k\leq n-1}\norm{Q_{k+1}})\Sigma_{k=0}^{n-1}e^{\epsilon(k+1)}\beta^{n-k-1}C_2(1+C_1)^{1+\alpha}\norm{Q_kz_k}^{1+\alpha}\\
&\leq\rho\cdot \frac{\beta^n}{2}+\rho\cdot C_4\Sigma_{k=0}^{n-1}(e^{\epsilon}\eta^{\alpha})^k\beta^{n-k-1}\eta^{k+1}\rho^{\alpha}.\end{aligned}$$

Recall that $0<e^{\epsilon}\eta^{\alpha}<1$. One has $$\norm{Q_nz_n}\leq\eta^n[\frac{\rho}{2}\cdot (\frac{\beta}{\eta})^n+\rho C_4\cdot\rho^{\alpha}\frac{\eta}{\eta-\beta}].$$

Together with $\beta<\eta<1$ and $\rho\leq(\frac{\eta-\beta}{2C_4\eta})^{-\alpha}$, we obtain that $\norm{Q_nz_n}\leq\rho\cdot\eta^{n}.$ Thus, we have proved the claim and hence $z_n\rightarrow 0$ as $n\rightarrow \infty$.
\end{proof}

\begin{theorem}\label{omega-lambdak<=0}
 Assume that there exists a regular point $z\in\omega(x)$ satisfying $\lambda_{kz}\leq 0$. Then either $x\in Q$, or $\omega(x)=\{z\}$ is a singleton.
\end{theorem}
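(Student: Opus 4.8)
The plan is to exploit Lemma~\ref{oribit-lambdak<0} at the regular point $z\in\omega(x)$, together with the co-limit Lemma~\ref{co-limit}, to force a dichotomy on the orbit of $x$. Since $z$ is a regular point with $\lambda_{kz}\le 0$, there is an open neighborhood $\mathcal{V}$ of $z$ as in Lemma~\ref{oribit-lambdak<0}: for every $y\in\mathcal{V}$, either $\norm{\Phi_t(z)-\Phi_t(y)}\to 0$ as $t\to+\infty$ (case $(a)$), or there is $T>0$ with $\Phi_T(z)-\Phi_T(y)\in C$ (case $(b)$), which moreover propagates to $\Phi_t(z)-\Phi_t(y)\in\operatorname{Int}C$ for $t>T$. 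The key observation is that since $z\in\omega(x)$, there is a sequence $t_n\to+\infty$ with $\Phi_{t_n}(x)\to z$; for $n$ large, $\Phi_{t_n}(x)\in\mathcal{V}$, so we may apply the dichotomy with $y=\Phi_{t_n}(x)$.

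First I would dispose of case $(b)$. If for some $n$ the point $y=\Phi_{t_n}(x)$ falls into case $(b)$, then there is $T>0$ with $\Phi_{T+t_n}(x)-\Phi_T(z)\in C$, i.e.\ two points of $O(x)$ are comparable once we also produce a point of $O(x)$ near $\Phi_T(z)$—more carefully, pick another return time $t_m$ with $t_m>T+t_n$; since $\Phi_{t_m}(x)\to z$ as well, a further application of Lemma~\ref{oribit-lambdak<0} (or direct comparison using strong monotonicity and the fact that $\Phi_t(z)-\Phi_t(\Phi_{t_n}(x))\in\operatorname{Int}C$ persists) yields two distinct ordered points on $O(x)$, so $x\in Q$. (One has to be a little careful that the two points produced are genuinely distinct and lie on $O(x)$; if $O(x)$ is a single equilibrium or periodic orbit the statement is trivial or immediate, so we may assume the orbit is nontrivial and aperiodic.) Thus if case $(b)$ ever occurs, $x\in Q$ and we are done.

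Otherwise case $(a)$ holds for every sufficiently large $n$: $\norm{\Phi_{t}(\Phi_{t_n}(x))-\Phi_t(z)}\to 0$ as $t\to+\infty$, i.e.\ the orbit $O(x)$ is forward asymptotic to $O(z)$. This should force $\omega(x)=\omega(z)\subset\omega(x)$, and in fact $\omega(x)=\overline{O(z)}$. Now I would invoke minimality/recurrence properties of $\omega(x)$: since $z\in\omega(x)$ and $\omega(x)$ is the whole omega-limit set of a point whose orbit shadows $O(z)$, one deduces $\omega(x)$ is chain-recurrent and equal to $\overline{O(z)}$; combined with Lemma~\ref{co-limit} applied to the two orbits $O(x)$ (through a late return point) and $O(z)$ themselves—both converging to $z$ along suitable time sequences—we get that $z\in Q\cup E$. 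If $z\in Q$ then $O(z)\subset\omega(x)$ is pseudo-ordered, and pulling an ordered pair on $O(z)$ back to $O(x)$ via the asymptotic-phase estimate of case $(a)$ gives $x\in Q$. If instead $z\in E$, then $\omega(z)=\{z\}$; but $\omega(x)$ is connected, invariant, and forward-asymptotic behaviour of $O(x)$ to $O(z)=\{z\}$ forces $\omega(x)=\{z\}$, a singleton.

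The main obstacle I anticipate is the bookkeeping in case $(a)$: turning ``$O(x)$ is forward asymptotic to $O(z)$'' into the rigid conclusion $\omega(x)=\overline{O(z)}$ and then extracting either pseudo-orderedness of $O(x)$ or $z\in E$. The subtlety is that $z$ need not be recurrent a priori, so $\overline{O(z)}$ could be strictly larger than $\omega(z)$; one must argue that $\omega(x)$, being an omega-limit set, is contained in $\omega(z)\subset\overline{O(z)}$ while simultaneously containing $O(z)$, which pins everything down. Making the transfer of an ordered pair from $O(z)$ to $O(x)$ rigorous—ensuring the transferred pair is still a pair of \emph{distinct} points and lies in $C$ and not merely in its closure—will require a careful use of strong monotonicity together with the co-limit Lemma~\ref{co-limit}; this is where I expect the real work to be.
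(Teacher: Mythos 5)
Your treatment of case $(b)$ follows the paper's own route: once some return point $\Phi_{t_0}(x)\in\mathcal{V}$ falls into case $(b)$, strong monotonicity produces open neighborhoods $U\thickapprox V$ (after flowing forward) of $z$ and of $\Phi_{t_0}(x)$, and a later return $\Phi_{t_1}(x)\in U\setminus\{\Phi_{t_0}(x)\}$ yields a distinct ordered pair on $O(x)$, so $x\in Q$; your sketch is the right idea even though you leave the distinctness bookkeeping implicit. The genuine gap is in case $(a)$. There you invoke Lemma~\ref{co-limit} ``applied to the two orbits $O(x)$ and $O(z)$,'' but that lemma requires its two initial points to be \emph{ordered}, $x_1\thicksim x_2$, and in case $(a)$ you have precisely no ordered pair at your disposal --- case $(b)$ is the ordered alternative, and you are assuming it never occurs. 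Moreover $z$ need not be recurrent, so it is not even clear that both orbits converge to $z$ along a common time sequence. Hence the step ``$z\in Q\cup E$'' is unsupported, and without it your case $(a)$ does not close: knowing only that $\omega(x)=\omega(z)$ leaves open the possibility that $O(z)$ is a nontrivial recurrent orbit with $x\notin Q$.

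The paper's mechanism for case $(a)$ is the idea you are missing: it compares two \emph{distinct return times} $\tau_1<\tau_2$ in $\Gamma=\{t\ge 0:\Phi_t(x)\in\mathcal{V}\}$ with $\Phi_{\tau_1}(x)\ne\Phi_{\tau_2}(x)$. Since both $\Phi_{\tau_1}(x)$ and $\Phi_{\tau_2}(x)$ are asymptotic to $O(z)$, they are asymptotic to each other, and evaluating along a sequence $s_k$ realizing any $u\in\omega(x)$ gives $u=\Phi_{\tau_2-\tau_1}(u)$; thus every point of $\omega(x)$ is $(\tau_2-\tau_1)$-periodic. Because $\mathcal{V}$ is open and $x$ is not an equilibrium, $\Gamma$ contains a whole interval $[\tau_0,\tau_0+\epsilon]$ on which $t\mapsto\Phi_t(x)$ is injective, so $\omega(x)$ consists of $s$-periodic points for every $s\in[0,\epsilon]$, hence of equilibria; in particular $z\in E$, and then case $(a)$ gives $\norm{\Phi_{\tau+t}(x)-z}\to 0$, i.e.\ $\omega(x)=\{z\}$. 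You would need to supply this (or an equivalent) argument to repair case $(a)$.
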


\begin{proof} Without loss of generality, we assume that $x$ is not an equilibrium. By Lemma $\ref{oribit-lambdak<0}$, there exists an open neighborhood $\mathcal{V}$ of $z$ such that for any $y\in \mathcal{V}$, one of following properties holds:

$(a)$ $\norm{\Phi_{t}(z)-\Phi_{t}(y)}\rightarrow 0$ as $t\rightarrow +\infty$.

$(b)$ There is $T>0$ such that $\Phi_{T}(z)-\Phi_{T}(y)\in C$.

Now, we define the set $\Gamma=\{t\geq 0:\,\Phi_t(x)\in \mathcal{V}\}$. Since $z\in \omega(x)$, it is clear that $\Gamma\neq \emptyset$.

If $\Phi_{t_0}(x)$ satisfies property $(b)$ for some $t_0\in \Gamma$, then there is some $T>0$ such that $\Phi_{T}(z)-\Phi_{T+t_0}(x)\in C$. Furthermore, by the strong monotonicity of $\Phi_t$, one can find a $T_0>0$ and an open neighborhood $U$ of $z$ and $V$ of $\Phi_{t_0}(x)$, respectively such that $\Phi_{T+T_0}U\thickapprox \Phi_{T+T_0+t_0}V$. Recall that $z\in\omega(x)$. Then, take $t_{1}>t_{0}$ such that $\Phi_{t_{1}}(x)\in U\setminus \{\Phi_{t_0}(x)\}$. Consequently, we have $\Phi_{t_{1}+T+T_0}(x)\thickapprox\Phi_{t_0+T+T_0}(x)$, which implies $O(x)$ is pseudo-ordered.

If $\Phi_{\tau}(x)$ satisfies property $(a)$ for any $\tau\in\Gamma$, then for any $\tau_1,\tau_2\in \Gamma$ with $\tau_1<\tau_2$ satisfying $\Phi_{\tau_1}(x)\neq \Phi_{\tau_2}(x)$, one has $\norm{\Phi_{\tau_1+t}(x)-\Phi_{t}(z)}\rightarrow 0$
and $\norm{\Phi_{\tau_2+t}(x)-\Phi_{t}(z)}\rightarrow 0$ as $t\rightarrow +\infty$. So $\norm{\Phi_{\tau_1+t}(x)-\Phi_{\tau_2+t}(x)}\rightarrow 0$ as $t\rightarrow +\infty$. Fix any $u\in \omega(x)$, there is a sequence $\{s_k\}_{k=1}^{\infty}$ such that $s_k\rightarrow +\infty$ and $\Phi_{s_k}(x)\rightarrow u$ as $k\rightarrow +\infty$. Hence, $\norm{\Phi_{\tau_1+s_k}(x)-\Phi_{\tau_2+s_k}(x)}\rightarrow 0$ as $k\rightarrow +\infty$, which implies that $u=\Phi_{\tau_2-\tau_1}(u)$. By the arbitriness of $u$, it follows that $\omega(x)$ consists of $(\tau_2-\tau_1)$-periodic points.

Now, since $\mathcal{V}$ is open and $x$ is not an equilibrium, one can find $\tau_0\in \Gamma$ and $\epsilon>0$ satisfying: $(i)$. $[\tau_0,\tau_0+\epsilon]\subset \mathbb{T}$; $(ii)$. $\Phi_{s_1}(x)\neq \Phi_{s_2}(x)$ for any $s_1,s_2\in [\tau_0,\tau_0+\epsilon]$ with $s_1\neq s_2$.

By repeating the argument in the previous paragraph, one can obtain that $\omega(x)$ consists of $s$-periodic point for any $s\in [0,\epsilon]$. Thus, $\omega(x)$ consists of equilibria. In particular, $z$ is an equilibrium. Recall that $\Phi_{\tau}(x)$ satisfies property $(a)$ for $\tau\in\Gamma$. Then one has $\norm{\Phi_{\tau+t}(x)-z}=\norm{\Phi_{\tau+t}(x)-\Phi_t(z)}\rightarrow 0$ as $t\rightarrow +\infty$. Therefore, one has $\omega(x)=\{z\}$, which completes the proof.
\end{proof}
\vskip 2mm
%%%%%%%%%%%%%%%%%%%%%%%%%%%%%%%%%%%%%%%%%%%%%%%%%%%%%%%%%%%%%%%%%%%%%%%%%%%%%%%
Now we will discuss the case that $\lambda_{kz}>0$ for all point $z\in \omega(x)$. Before going further,  we here present the following two technical lemmas.

\begin{lemma}\label{expand-in-Ez} Let $\delta_3>0$ be defined in Lemma {\rm\ref{L:ES-Vs-Cone}(iii)}. If $\lambda_{kz}>0$ for any $z\in \omega(x)$, then there is a locally constant (hence bounded) function $\nu(z)$ on $\omega(x)$ such that
\begin{description}
\item[{\rm (i).}] $\norm{D_z\Phi_{\nu(z)}w_F}< \frac{1}{2\delta_3}\norm{D_z\Phi_{\nu(z)}w_E}$,
\item[{\rm (ii).}] $\norm{D_z\Phi_{\nu(z)}w_E}> 4(1+\delta_3)$
\end{description}
for any $z\in \omega(x)$ and any unit vector $w_E\in E_z,w_F\in F_z$.
\end{lemma}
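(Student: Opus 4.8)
The plan is to exploit the positivity of the $k$-Lyapunov exponent at every point of the compact set $\omega(x)$, together with the uniform estimates coming from the $k$-exponential separation, to produce a return time $\nu(z)$ at each point that simultaneously expands vectors in $E_z$ and makes the $F$-component negligible compared with the $E$-component. First I would fix $z\in\omega(x)$. Since $\lambda_{kz}>0$, by the definition \eqref{D:k-Lyapu} of the $k$-Lyapunov exponent as a $\limsup$, there is a sequence $t_j\to+\infty$ along which $m(D_z\Phi_{t_j}|_{E_z})\ge e^{\lambda_{kz}t_j/2}$ for $j$ large; in particular $m(D_z\Phi_{t}|_{E_z})\to\infty$ along this sequence, so one can pick a single time $\tau=\tau(z)$ with $m(D_z\Phi_{\tau}|_{E_z})$ as large as we like, say exceeding $4(1+\delta_3)$. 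Because $m(D_z\Phi_{\tau}|_{E_z})=\inf_{w_E\in E_z\cap S}\norm{D_z\Phi_\tau w_E}$, this immediately gives (ii) for that $z$ and that choice of $\tau$. For (i), I would combine two facts: property (iii) of the $k$-exponential separation gives $\norm{D_z\Phi_\tau w_F}\le M\gamma^{\tau}\norm{D_z\Phi_\tau w_E}$ for unit $w_F\in F_z$, $w_E\in E_z$; since $\gamma<1$, choosing $\tau$ additionally large enough that $M\gamma^{\tau}<\frac{1}{2\delta_3}$ forces (i). So for each $z$ there is a time, call it $\nu_0(z)$, satisfying both inequalities strictly at $z$.

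Next I would upgrade this pointwise statement to a locally constant function. The key point is that all the quantities involved, $z\mapsto \norm{D_z\Phi_{t} w_E}$, $z\mapsto\norm{D_z\Phi_t w_F}$, and the subspaces $E_z, F_z$, depend continuously on $z\in\omega(x)$ (the bundles $E,F$ are continuous bundles by Proposition~\ref{P:Cones-imply-ES}, and $D\Phi_t$ is continuous in $(t,x)$ by (FWW)). Hence the two strict inequalities in (i) and (ii), which hold at $z$ with the fixed time $t=\nu_0(z)$, persist on an open neighborhood $U_z$ of $z$ in $\omega(x)$ with the \emph{same} time $\nu_0(z)$: on $U_z$ we have $\norm{D_{z'}\Phi_{\nu_0(z)}w_F}<\frac{1}{2\delta_3}\norm{D_{z'}\Phi_{\nu_0(z)}w_E}$ and $\norm{D_{z'}\Phi_{\nu_0(z)}w_E}>4(1+\delta_3)$ for all $z'\in U_z$ and all unit $w_E\in E_{z'}$, $w_F\in F_{z'}$ (for the $E$-estimate one uses that the infimum over the unit sphere of $E_{z'}$ varies continuously). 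Now cover the compact set $\omega(x)$ by finitely many such neighborhoods $U_{z_1},\dots,U_{z_m}$; define $\nu(z)=\nu_0(z_i)$ where $i=i(z)$ is, say, the smallest index with $z\in U_{z_i}$. Then $\nu$ takes only finitely many values and is locally constant wherever the index $i(z)$ is locally constant; to make it genuinely locally constant on all of $\omega(x)$ one shrinks the cover to a partition refinement, or simply observes that on each piece of the finite cover a valid common time exists so one may choose $\nu$ constant on a (relatively clopen-free but) locally constant partition — concretely, set $\nu\equiv\nu_0(z_1)$ on $U_{z_1}$, $\nu\equiv\nu_0(z_2)$ on $U_{z_2}\setminus U_{z_1}$, etc., and note each such difference set, being open minus open, still admits the required estimates, and any point has a neighborhood on which $\nu$ is constant provided we enlarge slightly — I would phrase this via a standard finite-cover/locally-constant argument. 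Boundedness of $\nu$ is then automatic since it takes finitely many values.

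The main obstacle I anticipate is the passage from the pointwise choice of expansion time to a choice that is locally constant (and in particular uniformly bounded) on $\omega(x)$: the $\limsup$ in the definition of $\lambda_{kz}$ means the good times for $z$ might a priori be sparse, and a priori different points could require arbitrarily large times. The resolution is exactly the continuity-plus-compactness argument above: continuity lets one freeze a single good time on a whole neighborhood, and compactness of $\omega(x)$ then caps the collection of times needed. A secondary technical point to handle carefully is that for the lower bound (ii) one must control $\inf_{w_E\in E_{z'}\cap S}\norm{D_{z'}\Phi_\tau w_E}$, not just the norm at a single vector; this follows because $z'\mapsto m(D_{z'}\Phi_\tau|_{E_{z'}})$ is continuous (it is the infimum of a continuous function over the fiberwise unit sphere of a continuous vector bundle over a compact base), so the strict inequality $m(D_z\Phi_\tau|_{E_z})>4(1+\delta_3)$ opens up to a neighborhood. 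With these two observations in place, the lemma follows without further computation.
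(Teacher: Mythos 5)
Your argument is correct and follows essentially the same route as the paper: inequality (i) comes uniformly from the exponential-separation constants $M,\gamma$ once $t$ exceeds a fixed $T$ with $M\gamma^{T}<\tfrac{1}{2\delta_3}$, inequality (ii) comes pointwise from the $\limsup$ definition of $\lambda_{kz}>0$, and the upgrade to a bounded, locally constant $\nu$ uses continuity of $z\mapsto D_z\Phi_t$ and the bundles together with compactness of $\omega(x)$. The only soft spot — that your explicit piecewise definition on $U_{z_2}\setminus U_{z_1}$ is not literally locally constant at boundary points — is a technicality the paper glosses over in exactly the same way, and boundedness (the property actually used later) is secured by the finite cover in both arguments.
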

\begin{proof} Since $(\Phi_t, D\Phi_t)$ admits a $k$-dimensional exponential continuous separation along $\omega(x)$ as $\omega(x)\times \RR^n=\omega(x)\times (E_y)\bigoplus \omega(x)\times (F_y)$, it is clear that  there is a number $T>0$ such that $\norm{D_z\Phi_{t}w_F}< \frac{1}{2\delta_3}\norm{D_z\Phi_{t}w_E}$ for $w_E\in E_z\cap S$, $w_F\in F_z\cap S$ and any $t>T$.

Moreover, by the definition of $\lambda_{kz}$,  for each $z\in \omega(x)$, there is a sequence $t_n\to +\infty$ such that $$\norm{D_z \Phi_{t_n}w_E}> e^{\frac{\lambda_{kz}}{2}t_n}$$ for any $w_E\in E_z\cap S$. Since
$\lambda_{kz}>0$, one can find an integer $N_z>0$ such that $\norm{D_z\Phi_{t_n}w_E}> 4(1+\delta_3)$ for any $t_n>N_z$ and $w_E\in E_z\cap S$.

Therefore, for each $z\in \omega(x)$, one can associate with a number $\nu(z)\ge \max\{T,N_z\}>0$ such that
 $\norm{D_z\Phi_{\nu(z)}w_F}< \frac{1}{2\delta_3}\norm{D_z\Phi_{\nu(z)}w_E}$ and $\norm{D_z\Phi_{\nu(z)}w_E}> 4(1+\delta_3)$ for any $w_E\in E_z\cap S$, $w_F\in F_z\cap S$ and $z\in \omega(x)$.  Moreover, together with the compactness of $\omega(x)$, the continuity of $z\mapsto D_z\Phi_\nu$ implies that one can further take such $\nu(z)$ as a locally constant (hence bounded) function. This completes the proof.
\end{proof}

\begin{lemma}\label{Distance-nonlinear-system} Assume that $\lambda_{kz}>0$ for any $z\in \omega(x)$. There exists a constant $\delta>0$ such that $$\limsup_{t\to +\infty}\norm{\Phi_t(y)-\Phi_t(x)}\ge \delta,$$ whenever $y$ satisfies $y\neq x$ and $y\thicksim x$.

\end{lemma}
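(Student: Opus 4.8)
The plan is to argue by contradiction. Strong monotonicity puts $v(t):=\Phi_t(y)-\Phi_t(x)$ in $\mathrm{Int}\,C$ for every $t>0$, and I want to show that, thanks to the positivity of all $k$-Lyapunov exponents on $\omega(x)$, any nonzero vector of $C$ is uniformly expanded by the linearized flow over the time-lapse $\nu(\cdot)$ of Lemma \ref{expand-in-Ez}; this expansion dominates the $C^{1,\alpha}$ nonlinear error provided $v(t)$ stays small, forcing $\norm{v(t)}$ to grow geometrically along a sequence of times — which is impossible if $\limsup_{t\to\infty}\norm{v(t)}$ is below a suitable $\delta>0$. The constant $\delta$ will come out of the expansion and smoothness estimates and will not depend on $y$.

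First I would record a scalar expansion estimate on $\omega(x)$: there is a bounded, locally constant $\nu(\cdot)$ on $\omega(x)$ with $\norm{D_z\Phi_{\nu(z)}v}\ge 2\norm{v}$ for all $z\in\omega(x)$ and $v\in C\setminus\{0\}$. This follows by splitting $v=P_zv+Q_zv$ (with $P_zv\ne0$ since $F_z\cap C=\{0\}$): Lemma \ref{L:ES-Vs-Cone}(iii) gives $\norm{Q_zv}\le\delta_3\norm{P_zv}$, and Lemma \ref{expand-in-Ez}(i)--(ii) then make the $E$-component $D_z\Phi_{\nu(z)}P_zv$ dominate and expand while the $F$-component $D_z\Phi_{\nu(z)}Q_zv$ stays comparatively small; choosing in Lemma \ref{expand-in-Ez} the expansion rate $4(1+\delta_3)$ yields the factor $2$ after accounting for $\norm{v}\le(1+\delta_3)\norm{P_zv}$. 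Since $\nu$ is locally constant on the compact set $\omega(x)$ it takes finitely many values $\nu_1,\dots,\nu_m$; by continuity of $z\mapsto D_z\Phi_{\nu_j}$ and compactness of $C\cap S$, I can extend $\nu$ to a bounded, locally constant function on a neighborhood $\mathcal N$ of $\omega(x)$ in $\RR^n$ so that $\norm{D_z\Phi_{\nu(z)}v}\ge\tfrac74\norm{v}$ for all $z\in\mathcal N$ and $v\in C$.

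Next I would set up the contradiction. By $C^{1,\alpha}$-smoothness, compactness of $\overline{O(x)}$ (recall $x$ lies in the $\omega$-compact set $D$), and finiteness of the values of $\nu$, there are $r,L>0$ with $\norm{\Phi_s(p+w)-\Phi_s(p)-D_p\Phi_sw}\le L\norm{w}^{1+\alpha}$ uniformly for $p$ in a fixed compact neighborhood of $\overline{O(x)}$, $s\in\{\nu_1,\dots,\nu_m\}$, and $\norm{w}<r$. Fix $\delta\in(0,r)$ with $L\delta^{\alpha}<\tfrac14$ and suppose, contrary to the claim, that $\limsup_{t\to\infty}\norm{v(t)}<\delta$; then $v(t)$ is bounded, and there is $t_0>0$ with $\norm{v(t)}<\delta$ for all $t\ge t_0$ and (since the precompact orbit of $x$ approaches $\omega(x)$) $\Phi_t(x)\in\mathcal N$ for all $t\ge t_0$. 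Setting $\tau_0=t_0$, $p_i=\Phi_{\tau_i}(x)\in\mathcal N$, $\tau_{i+1}=\tau_i+\nu(p_i)$, $v_i=v(\tau_i)\in\mathrm{Int}\,C$, and using $v_{i+1}=D_{p_i}\Phi_{\nu(p_i)}v_i+\big(\Phi_{\nu(p_i)}(p_i+v_i)-\Phi_{\nu(p_i)}(p_i)-D_{p_i}\Phi_{\nu(p_i)}v_i\big)$ together with the scalar expansion and $\norm{v_i}<\delta$, we get $\norm{v_{i+1}}\ge\tfrac74\norm{v_i}-L\norm{v_i}^{1+\alpha}\ge\tfrac32\norm{v_i}$, hence $\norm{v_i}\ge(3/2)^i\norm{v_0}$. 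Since $\Phi_{\tau_0}$ is a homeomorphism and $y\ne x$, $\norm{v_0}>0$, so $\norm{v_i}\to\infty$, contradicting $\norm{v_i}<\delta$; as $\tau_i\to\infty$ (because $\nu$ is bounded below by a positive constant) this contradicts $\limsup_{t\to\infty}\norm{v(t)}<\delta$. Thus the required $\delta$ exists, and it depends only on $\alpha$, $L$, $\delta_3$, the values of $\nu$ and the set $\mathcal N$ — not on $y$.

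The step I expect to be most delicate is the uniformity bookkeeping rather than the growth estimate: $\lambda_{kz}>0$ is assumed only on $\omega(x)$, the point $x$ is neither an equilibrium nor necessarily a regular point, and $O(y)$ is not a priori precompact, so one must localize the whole argument to a neighborhood of $\omega(x)$ that the orbit of $x$ eventually enters and never leaves, derive there a scalar expansion bound with $y$-independent constants, and dispose of the unbounded case immediately. With this in place the geometric-growth contradiction is routine.
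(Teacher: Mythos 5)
Your argument is correct and follows essentially the same route as the paper's proof: both derive the uniform expansion $\norm{D_z\Phi_{\nu(z)}v}\ge 2\norm{v}$ for $v\in C\setminus\{0\}$ from Lemmas \ref{expand-in-Ez} and \ref{L:ES-Vs-Cone}(iii), iterate along $\tau_{k+1}=\tau_k+\nu(\cdot)$, and dominate the nonlinear error to force geometric growth contradicting $\limsup_{t\to+\infty}\norm{\Phi_t(y)-\Phi_t(x)}<\delta$. The only (immaterial) difference is that you extend the expansion estimate to a neighborhood of $\omega(x)$ and linearize at $\Phi_{\tau_k}(x)$ with the H\"older remainder, whereas the paper linearizes at a shadowing point $z_{\tau_k}\in\omega(x)$ and absorbs the discrepancy via uniform continuity of $D\Phi_\nu$.
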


\begin{proof}
Let $m$ be an upper bound of the function $\nu(\cdot)$ on $\omega(x)$ defined in Lemma $\ref{expand-in-Ez}$. Due to the compactness of $\omega(x)$, one can choose a $\delta_0>0$ so small that $\norm{D_u\Phi_{\nu}-D_v\Phi_{\nu}}\leq \frac{1}{2}$ for any $u,v\in \text{Co}\{u\in \mathbb{R}^n: d(u,\omega(x))\leq 1\}$ with $\norm{u-v}\le \delta_0$ and any $\nu\in [0,m]$, where ``Co" means the convex hull.

Let $\delta=\delta_0/2>0$. We will show that such $\delta$ satisfies this lemma. Suppose that one can find some
$y\neq x$ with $y\thicksim x$ such that $\limsup_{t\to +\infty}\norm{\Phi_t(y)-\Phi_t(x)}<\delta.$ Then
there exists an $N_1>0$ such that $\norm{\Phi_t(y)-\Phi_t(x)}<\delta$ for any $t\ge N_1$.
Since $\Phi_t(x)$ is attracted to $\omega(x)$, one can choose $z_{t}\in \omega(x)$ such that $\norm{\Phi_t (x)-z_t}\to 0$ as $t\to \infty$. Hence, there exists a number $N_2>N_1$ such that
\begin{equation}\label{E:asym-small}
\norm{\Phi_t (y)-\Phi_t (x)}\leq \delta\,\,\text{ and }\,\, \norm{\Phi_t (x)-z_t}\leq \delta, \quad \text{ for any }t\ge N_2.
\end{equation}

Let also $\tau_1=1$ and $\tau_{k+1}=\tau_k+\nu(z_{\tau_k})$ for $k=1,2,\cdots$.
 Denoted by $y_{\tau_k}=\Phi_{\tau_k}(y)$ and $x_{\tau_k}=\Phi_{\tau_k}(x)$. Then we have that $$\begin{aligned}y_{\tau_{k+1}}-x_{\tau_{k+1}}
&=D_{z_{\tau_k}}\Phi_{\nu(z_{\tau_k})}(y_{\tau_k}-x_{\tau_k})\\
&+\int_{0}^{1}\big[D_{x_{\tau_k}+s(y_{\tau_k}-x_{\tau_k})}\Phi_{\nu(z_{\tau_k})}
-D_{z_{\tau_k}}\Phi_{\nu(z_{\tau_k})}\big](y_{\tau_k}-x_{\tau_k})ds.\end{aligned}$$

By \eqref{E:asym-small}, one can find a positive number $N>0$ such that $\norm{y_{\tau_k}-x_{\tau_k}}\leq \delta$ and $\norm{x_{\tau_k}-z_{\tau_k}}\leq \delta$ for any $k\geq N$. Hence, $\norm{x_{\tau_k}+s(y_{\tau_k}-x_{\tau_k})-z_{\tau_k}}\le 2\delta=\delta_0$ for any $s\in [0,1]$ and $k\geq N$. As a consequence, we have
\begin{equation}\label{E:Holder-part}
\norm{\int_{0}^{1}\big[D_{x_{\tau_k}+s(y_{\tau_k}-x_{\tau_k})}
\Phi_{\nu(z_{\tau_k})}-D_{z_{\tau_k}}\Phi_{\nu(z_{\tau_k})}\big](y_{\tau_k}-x_{\tau_k})ds}
<\frac{1}{2}\norm{(y_{\tau_k}-x_{\tau_k})}
\end{equation}
for any $k\geq N$.

On the other hand, since $y_{\tau_k}-x_{\tau_k}\in C\setminus \{0\}$, Lemma \ref{L:ES-Vs-Cone}(iii)
directly entails that
\begin{equation}\label{E:E-Projection-control}
\norm{y_{\tau_k}-x_{\tau_k}}\leq (1+\delta_3)\norm{P_{\tau_k}(y_{\tau_k}-x_{\tau_k})}\,\,\,\, \text{ and }
\,\,\,\, \frac{\norm{Q_{\tau_k}(y_{\tau_k}-x_{\tau_k})}}{\norm{P_{\tau_k}(y_{\tau_k}-x_{\tau_k})}}\le \delta_3
\end{equation}
for any $k\ge 1$. It then follows from Lemma \ref{expand-in-Ez}(i)-(ii) and \eqref{E:E-Projection-control} that $$\begin{aligned}\norm{D_{z_{\tau_k}}\Phi_{\nu(z_{\tau_k})}(y_{\tau_k}-x_{\tau_k})}&\geq \norm{D_{z_{\tau_k}}\Phi_{\nu(z_{\tau_k})}P_{\tau_k}(y_{\tau_k}-x_{\tau_k})}- \norm{D_{z_{\tau_k}}\Phi_{\nu(z_{\tau_k})}Q_{\tau_k}(y_{\tau_k}-x_{\tau_k})}\\
&\ge \norm{D_{z_{\tau_k}}\Phi_{\nu(z_{\tau_k})}P_{\tau_k}(y_{\tau_k}-x_{\tau_k})}\cdot
\left[1-\frac{\norm{D_{z_{\tau_k}}\Phi_{\nu(z_{\tau_k})}Q_{\tau_k}(y_{\tau_k}-x_{\tau_k})}}
{\norm{D_{z_{\tau_k}}\Phi_{\nu(z_{\tau_k})}P_{\tau_k}(y_{\tau_k}-x_{\tau_k})}}\right]\\
&\geq\frac{1}{2}\norm{D_{z_{\tau_k}}\Phi_{\nu(z_{\tau_k})}P_{\tau_k}(y_{\tau_k}-x_{\tau_k})}\geq2(1+\delta_3)\norm{P_{\tau_k}(y_{\tau_k}-x_{\tau_k})}\\
&\geq 2\norm{y_{\tau_k}-x_{\tau_k}}\end{aligned}$$ for any $k\geq N$.
Together with \eqref{E:Holder-part}, this implies that
$$\norm{y_{\tau_{k+1}}-x_{\tau_{k+1}}}\geq \frac{3}{2}\norm{y_{\tau_k}-x_{\tau_k}}$$ for any $k\geq N$. This contradicts $\limsup\limits_{t\to +\infty}\norm{\Phi_t(y)-\Phi_t(x)}<\delta.$ Thus, we have proved the Lemma.
\end{proof}

\begin{theorem}\label{omega-lambdak>0}
If $\lambda_{kz}>0$ for any $z\in \omega(x)$, then we have $x\in \overline{Q}$.
\end{theorem}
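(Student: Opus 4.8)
The goal is to show that if every point of $\omega(x)$ has positive $k$-Lyapunov exponent, then $x$ can be approximated by points whose orbits are pseudo-ordered. The overarching strategy is a perturbation argument: we want to exhibit, arbitrarily close to $x$, a point $y$ such that $y \thicksim x$ (so that the co-limit/order relation can be exploited) and whose forward orbit eventually becomes strongly ordered with $O(x)$, forcing $O(y)$ itself to be pseudo-ordered; if instead $x$ already lies in $\overline{Q}$ for a trivial reason (e.g.\ $x \in E$ or $O(x)$ is already pseudo-ordered) we are done immediately. So assume $x \notin Q$ and $x$ is not an equilibrium.

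First I would pick a small vector $v \in \tint C$ and consider the one-parameter family $y_s = x + sv$ for $s > 0$ small; since $v \in \tint C$ we have $y_s - x \in \tint C \subset C$, hence $y_s \thicksim x$ with $y_s \to x$ as $s \to 0$. By strong monotonicity, $\Phi_t(y_s) - \Phi_t(x) \in \tint C$ for all $t > 0$. Now invoke Lemma \ref{Distance-nonlinear-system}: there is a uniform $\delta > 0$ with $\limsup_{t\to+\infty}\norm{\Phi_t(y_s)-\Phi_t(x)} \ge \delta$ for every such $y_s$ (here I use $y_s \ne x$ and $y_s \thicksim x$). The plan is to combine this ``orbits of ordered pairs cannot asymptotically collapse'' fact with the order structure to land $y_s$ in $Q$, or to land a limit point of the $y_s$ in $\overline Q$. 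The key point is that along the orbit the difference $\Phi_t(y_s) - \Phi_t(x)$ stays in $\tint C$ and, by the expansion estimate underlying Lemma \ref{Distance-nonlinear-system} (the factor $\tfrac32$ growth per step $\tau_k$, driven by Lemma \ref{expand-in-Ez} and the positivity of $\lambda_{kz}$), the $E$-component of this difference grows while the $F$-component is controlled — so the difference not only fails to shrink but genuinely expands, eventually leaving any bounded region.

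Next I would extract the pseudo-ordering. Since $\omega(x)$ is compact and $D$ is $\omega$-compact, $O(x)$ is bounded, so there is a sequence $t_n \to \infty$ with $\Phi_{t_n}(x) \to p \in \omega(x)$. For $y_s$ fixed, the orbit $\Phi_t(y_s)$ need not be bounded a priori, but along a subsequence we can still track $\Phi_{t_n}(y_s)$; the difference $\Phi_{t_n}(y_s) - \Phi_{t_n}(x) \in \tint C$ has norm bounded below by $\delta$ but, via the expansion, we can actually choose times at which this difference is comparable to (or exceeds) $\delta$ while $\Phi_{t_n}(x)$ is close to $\omega(x)$. The strongly-monotone structure then gives, for suitable $s$, two ordered points on $O(y_s)$ directly: one shows that $\Phi_{t}(y_s) \thicksim \Phi_{t'}(y_s)$ for some $t < t'$ by comparing $\Phi_t(y_s)$ to $\Phi_t(x)$ and $\Phi_t(x)$ to $\Phi_{t'}(x)$ along the recurrence $\Phi_{t_n}(x)\to p$, using that $v$ can be taken so small that $\Phi_{t'-t}$ of a small strongly-ordered perturbation of $p$ remains strongly ordered above $\Phi_{t'}(x)$. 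Alternatively, and perhaps more cleanly, I would apply the co-limit Lemma \ref{co-limit}: if for some sequence $s_j \to 0$ the orbits $O(y_{s_j})$ all avoid $Q$, then one argues the difference orbits are eventually confined, contradicting the uniform lower bound $\delta$ combined with the genuine expansion; hence for all small $s$ we must have $y_s \in Q$, giving $x = \lim_{s\to 0} y_s \in \overline Q$.

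The main obstacle I anticipate is the lack of a priori boundedness of $O(y_s)$ and, relatedly, making the ``expansion plus order'' mechanism actually produce a genuine ordered pair on a single orbit $O(y_s)$ rather than merely a lower bound on a distance. The estimate in Lemma \ref{Distance-nonlinear-system} is run in coordinates centered on $O(x)$ and requires $\Phi_t(y_s)$ to stay within distance $\delta_0$ of $O(x)$; once the difference expands past that scale the linearized control is lost, so one must argue carefully that this very failure of confinement is what forces $\Phi_t(y_s)$ to, say, leave $D$ or to realize a strong ordering that carries back to a self-ordering of $O(y_s)$. I expect the right move is to fix a recurrence time $t_n$ with $\Phi_{t_n}(x)$ very close to $\Phi_0(x) = x$ (if $x$ is recurrent) or to an earlier orbit point, so that $\Phi_{t_n}(y_s)$ is a strongly-ordered perturbation of an earlier point of $O(y_s)$; threading this through while keeping $s$ small enough that everything stays controlled until that single comparison time — that bookkeeping is the delicate part, but it does not need new ideas beyond Lemmas \ref{co-limit}, \ref{L:ES-Vs-Cone}, \ref{expand-in-Ez}, and \ref{Distance-nonlinear-system} together with strong monotonicity.
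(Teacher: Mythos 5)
There is a genuine gap at the decisive step: converting the $\delta$-separation of the two orbits $O(x)$ and $O(y_s)$ into an ordered pair \emph{on the single orbit} $O(y_s)$. Your proposed mechanism is to compare $\Phi_t(y_s)$ to $\Phi_t(x)$ and then $\Phi_t(x)$ to $\Phi_{t'}(x)$ along a recurrence, but the relation $\thicksim$ induced by a $k(\ge 2)$-cone is not transitive (the paper stresses exactly this), and $\Phi_t(x)\thicksim\Phi_{t'}(x)$ is unavailable anyway since we may assume $x\notin Q$; your fallback requires $x$ to be recurrent, which need not hold. The alternative route via Lemma \ref{co-limit} also does not close: that lemma needs the two orbits to co-converge to a common point, which is precisely what Lemma \ref{Distance-nonlinear-system} forbids here, so it cannot be invoked in the contrapositive way you suggest. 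Finally, the claim that the difference $\Phi_t(y_s)-\Phi_t(x)$ ``eventually leaves any bounded region'' is false in general (both orbits are bounded when $D$ is $\omega$-compact); the expansion estimate of Lemma \ref{Distance-nonlinear-system} is only valid while the difference stays below the scale $\delta_0$, and all it yields is $\limsup_{t\to\infty}\norm{\Phi_t(y_s)-\Phi_t(x)}\ge\delta$.

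The missing idea, which is how the paper proceeds, is to push everything to the limit sets. From $\limsup_{t}\norm{\Phi_t(x_n)-\Phi_t(x)}\ge\delta$ and monotonicity one extracts $p_n\in\omega(x)$ and $q_n\in\omega(x_n)$ with $p_n\thicksim q_n$ and $\norm{p_n-q_n}\ge\delta$; passing to a subsequence gives $p\in\omega(x)$ and $q$ with $p\thicksim q$, $p\ne q$, hence $\Phi_1(p)\thickapprox\Phi_1(q)$, and one chooses disjoint open sets $U\ni\Phi_1(p)$, $V\ni\Phi_1(q)$ with $U\thickapprox V$. The ordered pair on $O(x_{n_k})$ then comes from the fact that this one orbit visits \emph{both} neighborhoods: it enters $V$ at time $s_k+1$ because $\Phi_{s_k}(x_{n_k})\to q$, and it enters $U$ at a fixed time $1+T$ because $\Phi_{1+T}(x)\in U$ (as $\Phi_1(p)\in\omega(x)$) and $x_{n_k}\to x$. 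This ``one orbit, two strongly ordered disjoint neighborhoods'' device is what replaces the transitivity you implicitly used, and it is absent from your proposal.
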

\begin{proof}
Choose any sequence $\{x_n\}_{n=1}^\infty\subset \mathbb{R}^n$ approaching $x$ with $x_n\thicksim x$. Since $x\in D$ and $D$ is open and $\omega$-compact,  $\overline{\cup_{n\geq 1}\omega(x_n)}$ is a nonempty compact set. Define $\omega_{\rm c}(x)=\bigcap_{n\geq 1}\overline{\bigcup_{m\geq n}\omega(x_m)}$. Then, it is clear that
$\omega_{\rm c}(x)$ is a nonempty and compact subset which is invariant with respect to $\Phi_t$. Moreover, $q\in\omega_{\rm c}(x)$ if and only if there are two subsequences $n_k\to +\infty$ and $q_{n_k}\in\omega(x_{n_k})$
such that $q_{n_k}\to q$ as $k\to \infty$.

Since $\lambda_{kz}>0$ for any $z\in \omega(x)$, it follows from
Lemma $\ref{Distance-nonlinear-system}$ that for each $n\ge 1$, there exist some $p_n\in \omega(x)$ and $q_n\in\omega(x_{n})$ such that $p_n\thicksim q_n$ and $\norm{p_n-q_n}\geq \delta$. Choose a subsequence $n_k\to \infty$, if necessary, such that $p_{n_k}\to p\in\omega(x)$ and $q_{n_k}\to q\in \omega_{\rm c}(x)$ as $k\to +\infty$; and hence, one can further find a susequence $s_k\to \infty$ such that $\Phi_{s_k}(x_{n_k})\to q$ as $k\to +\infty$.

Clearly, $p\thicksim q$ and $\norm{p-q}\geq \delta$. Since $\Phi_{t}$ is strongly monotone with respect to $k$-cone $C$, we have $\Phi_1(p)\thickapprox\Phi_{1}(q)$. Choose some open neighborhoods $U$ and $V$ of $\Phi_1(p)$ and $\Phi_1(q)$, respectively, such that $U\thickapprox V$ and $U\cap V=\emptyset$. Then $\Phi_{s_k+1}(x_{n_k})\in V$ for all $k$ sufficiently large. Recalling that $x_n\to x$, we can take a number $T>0$ s.t $\Phi_{1+T}(x_{n_k})\in U$ for all $k$ sufficiently large. Consequently,
$\Phi_{1+T}(x_{n_k})\thickapprox\Phi_{1+s_k}(x_{n_k})$ for all large $k$. This entails that the orbit $O(x_{n_k})$ is pseudo-ordered, i.e., $x_{n_k}\in Q$ for all large $k$, which implies that $x\in\overline{Q}$.
\end{proof}

\vskip 3mm

Motivated by Theorems \ref{omega-lambdak<=0} and \ref{omega-lambdak>0}, we define
the set of {\it regular points} on $\omega(x)$ as:
 \begin{equation}\label{E:rg-point}
\omega_0(x)=\{z\in\omega(x):\,z\,\, \text{is a regular point}\}.
\end{equation}
Due to the Multiplicative Ergodic Theorem (cf. \cite[Theorem 2.1]{John-Pa-S}) and the similar argument in \cite[Proposition 4.1]{Vi}, one has $\omega_0(x)$ is non-empty. Moreover, it is easy to see that any equilibrium in $\omega(x)$ is regular, and hence, is contained in $\omega_0(x)$.

\begin{theorem}\label{regular-lambdak>0} Assume that $\lambda_{k\tilde{z}}>0$ for any $\tilde{z}\in \omega_0(x)$. If there exists some $z\in\omega(x)\setminus \omega_0(x)$ such that $\lambda_{kz}\leq 0$, then $x\in \overline{Q}$.
\end{theorem}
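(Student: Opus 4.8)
The plan is to exploit the dichotomy that the hypothesis forces on the orbit. We are given a point $z\in\omega(x)$ which is irregular and has $\lambda_{kz}\le 0$, while every regular point of $\omega(x)$ has strictly positive $k$-Lyapunov exponent. Since $\lambda_{kz}$ is only a $\limsup$ and $z$ is not regular, there must exist a sequence $t_n\to+\infty$ along which $t_n^{-1}\log m(D_z\Phi_{t_n}|_{E_z})$ is bounded above by, say, $\lambda_{kz}/2$ if $\lambda_{kz}<0$, or by an arbitrarily small positive number if $\lambda_{kz}=0$; in either case the $E$-fibre contraction/expansion rate along $\{t_n\}$ is non-positive in the limit. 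The first step is therefore to extract from the backward and forward translates $\Phi_{t_n}(z)$ a limit point $z^\ast\in\omega(x)$ (using compactness and invariance of $\omega(x)$) and to control, via the cocycle identity $m(D_z\Phi_{t_n+s}|_{E_z})\ge m(D_{\Phi_{t_n}(z)}\Phi_s|_{E_{\Phi_{t_n}(z)}})\cdot m(D_z\Phi_{t_n}|_{E_z})$ together with the reverse submultiplicativity, the $E$-expansion of $\Phi_s$ at $z^\ast$. The aim is to show that $z^\ast$ is (or can be replaced by) a regular point, or at least a point whose forward $E$-expansion rate is $\le 0$, contradicting $\lambda_{k z^\ast}>0$ unless something degenerate happens.

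Concretely, I would run an ergodic/averaging argument in the spirit of Polá\v{c}ik--Tere\v{s}\v{c}\'ak: push forward a probability measure concentrated near the orbit segment $\{\Phi_t(z):0\le t\le t_n\}$, pass to a weak-$\ast$ limit $\mu$ supported on $\omega(x)$ which is $\Phi_t$-invariant, and apply the Multiplicative Ergodic Theorem to $\mu$. The integral $\int \lambda_{k\cdot}\,d\mu$ equals (by Kingman subadditive ergodic theorem applied to $\log m(D_\cdot\Phi_t|_{E_\cdot})$) the limit of $t_n^{-1}\log m(D_z\Phi_{t_n}|_{E_z})$-type quantities, which our choice of $\{t_n\}$ forces to be $\le 0$. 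But $\mu$-almost every point is regular, hence lies in $\omega_0(x)$, and by hypothesis has $\lambda_{k\cdot}>0$; integrating gives $\int\lambda_{k\cdot}\,d\mu>0$, a contradiction. The only way out is that no such $x$ with a positive-exponent regular set AND a non-positive-exponent irregular point can avoid $\overline{Q}$ — so in fact the hypothesis configuration must already imply pseudo-orderedness nearby, which is what we want. I would make this rigorous by arguing: IF $x\notin\overline Q$, then by Theorem \ref{omega-lambdak<=0} no regular point of $\omega(x)$ has exponent $\le 0$ (else $x\in Q\subset\overline Q$), consistent with our hypothesis; and by Theorem \ref{omega-lambdak>0} we cannot have $\lambda_{kz}>0$ for ALL $z\in\omega(x)$ (else $x\in\overline Q$); so $\omega(x)$ must contain an irregular $z$ with $\lambda_{kz}\le0$ — which is precisely the case we are in, and the ergodic contradiction above closes it, because the invariant measure built from the near-zero-expansion times $t_n$ would have to charge $\omega_0(x)$ where the exponent is positive.

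The main obstacle is the construction and control of the invariant measure $\mu$ so that $\int\lambda_{k\cdot}\,d\mu\le 0$: the subtlety is that $\lambda_{k\cdot}$ is defined with a $\limsup$ and is not obviously $\mu$-integrable or continuous, and the exponent along the special sequence $\{t_n\}$ need not descend to the ergodic components of $\mu$. To handle this I would instead work with the well-defined cocycle $a_t(\cdot)=\log m(D_\cdot\Phi_t|_{E_\cdot})$, note $a_{t+s}(\cdot)\ge a_t(\cdot)+a_s(\Phi_t\cdot)$ (super-additivity), and apply Kingman's theorem to $-a_t$, so that for $\mu$-a.e.\ point the limit $\lim_t t^{-1}a_t$ exists and equals $\lambda_{k\cdot}$; by the choice of $\{t_n\}$ and a Krylov--Bogolyubov construction using the empirical measures $\frac{1}{t_n}\int_0^{t_n}\delta_{\Phi_s(z)}\,ds$, the limit measure $\mu$ satisfies $\int\lambda_{k\cdot}\,d\mu=\lim_n \frac{1}{t_n}a_{t_n}(z)\le 0$ (using super-additivity to get $\le$). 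Then $\mu(\omega_0(x))=1$ forces $\int\lambda_{k\cdot}\,d\mu>0$, the contradiction. A secondary technical point is handling the degenerate possibility that $x$ itself is an equilibrium or that $\omega(x)$ is a single point, which are trivial or already covered, so we may assume $x$ is not an equilibrium and $\omega(x)$ is nontrivial throughout.
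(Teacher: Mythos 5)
Your strategy is to show that the hypothesized configuration is self-contradictory via a Krylov--Bogolyubov/Kingman argument, but the pivotal identity you assert, $\int\lambda_{k\cdot}\,d\mu=\lim_n t_n^{-1}a_{t_n}(z)$, does not follow from super-additivity, and this is exactly where the case $k\ge 2$ differs from $k=1$. For $k=1$ the cocycle $a_t(y)=\log\norm{D_y\Phi_t v_y}$ on the one-dimensional bundle $E$ is \emph{additive}, so $t_n^{-1}a_{t_n}(z)$ really is the integral of the continuous function $a_1$ against the empirical measure and converges to $\int a_1\,d\mu=\int\lambda_{k\cdot}\,d\mu$; that is the ergodic exclusion used in \cite{PT1}. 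For $k\ge 2$, $a_t(y)=\log m(D_y\Phi_t|_{E_y})$ is only super-additive, and the empirical-measure computation at block length $T$ yields only the one-sided bound $\liminf_n t_n^{-1}a_{t_n}(z)\ge T^{-1}\int a_T\,d\mu$. Since $T^{-1}\int a_T\,d\mu\le\sup_S S^{-1}\int a_S\,d\mu=\int\lambda_{k\cdot}\,d\mu$ (Kingman applied to the super-additive sequence), this finite-$T$ lower bound sits on the \emph{wrong side} of the quantity you need: one can perfectly well have $\int a_1\,d\mu<0$ while $\int\lambda_{k\cdot}\,d\mu>0$, because the infimum norm of a long product can vastly exceed the product of the one-step infimum norms (e.g.\ a period-two point where $D\Phi_1|_E$ alternates between ${\rm diag}(4,\tfrac12)$ and ${\rm diag}(\tfrac12,4)$: $m=\tfrac12$ at each step, yet the two-step product is $2I$). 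Upgrading your bound to $\liminf_n t_n^{-1}a_{t_n}(z)\ge\int\lambda_{k\cdot}\,d\mu$ would require the semi-uniform subadditive ergodic theorem, which you neither state nor prove; moreover the authors explicitly remark in the introduction that the ergodic exclusion of this third case works only for $k=1$ and that the case ``may happen'' when $k>1$, so an argument whose only possible output is a contradiction with the hypothesis cannot be the intended proof.

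The paper's actual proof makes no attempt to exclude the case; it extracts pseudo-ordered orbits directly from the irregular point $z$ by examining the flow-direction vector $v_z=\frac{d}{dt}\big|_{t=0}\Phi_t(z)$, whose exponent $\lambda(z,v_z)$ is $\le 0$ because $\norm{v_{\Phi_t(z)}}$ stays bounded. If $\lambda(z,v_z)=0$, then Lemma \ref{E-S-and-Lya-exponent}(i) (using $\lambda_{kz}\le 0$ and $\log\gamma<0$) forces $v_z\notin F_z$, so the exponential separation together with Lemma \ref{L:ES-Vs-Cone}(ii) pushes $D_z\Phi_t v_z$ into ${\rm Int}\,C$; hence $O(z)$, and then by strong monotonicity and $z\in\omega(x)$ also $O(x)$, is pseudo-ordered. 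If $\lambda(z,v_z)<0$, then $\omega(z)$ consists of equilibria, which are regular and therefore have positive exponents by hypothesis, so Theorem \ref{omega-lambdak>0} applied to $z$ gives $z\in\overline{Q}$, and a further two-scale limiting argument (the ``claim'' in the paper's proof, combined with Lemmas \ref{co-limit} and \ref{Distance-nonlinear-system}) transports this to $x\in\overline{Q}$. None of these ingredients appears in your proposal, so even apart from the ergodic gap the proposal does not establish the theorem.
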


\begin{proof} For any $y\in \omega(x)$, we define a vector $v_y:=\frac{d}{dt}|_{_{t=0}}\Phi_t(y)$ in $\RR^n$.  Clearly, the map $y\mapsto v_y$ is continuous. Moreover, we have $D_y \Phi_t(v_y)=v_{\Phi_t(y)}$ for all $t\in \mathbb{R}$; and hence, $v_y=0$ if and only if $y$ is an equilibrium.

Since $z\in\omega(x)\setminus \omega_0(x)$,  we have $v_z\neq 0$.  Then
\begin{equation}\label{E:center-Lypo} \lambda(z,v_z)=\limsup\limits_{t\to+\infty}\frac{\log\norm{D_z\Phi_t(v_z)}}{t}
=\limsup\limits_{t\to+\infty}\frac{\log\norm{v_{\Phi_t(z)}}}{t}.
\end{equation}
Recall that $y\mapsto v_y$ is continuous and $\omega(x)$ is compact. Then $\norm{v_{\Phi_t(z)}}$ is bounded uniformly for any $t\geq 0$. This implies that $\lambda(z,v_{z})\leq 0.$ We will consider the following two cases: (A1). $\lambda(z,v_{z})=0$; (A2). $\lambda(z,v_{z})<0$, respectively.

(A1). If $\lambda(z,v_z)=0$, then by $\lambda_{kz}\leq0$ and Lemma $\ref{E-S-and-Lya-exponent}$ (i), one has $v_z\notin F_z$. So, one can write $v_z$ as $v_z=\alpha v +\beta w$, where $v\in E_z\setminus\{0\}$, $w\in F_z$ and $\alpha\neq 0$. By the exponential separation property and Lemma \ref{L:ES-Vs-Cone}(ii), there is a constant $T>0$ such that $D_z\Phi_t(v_z)\in \text{Int}\, C$ for any $t>T$. This implies $z\in Q$. Together with the strong monotonicity of $\Phi$, one can find two different constants $\tau_1,\tau_2>0$ such that $\Phi_{\tau_1}z\thickapprox\Phi_{\tau_2}z$. Hence, there exist open neighborhoods $U_1$ and $U_2$ of $\Phi_{\tau_1}z$ and $\Phi_{\tau_2}z$, respectively such that $U_1\thickapprox U_2$ and $U_1\cap U_2=\emptyset$. Since $z\in \omega(x)$, we obtain that $x\in Q$.

(A2). If $\lambda(z,v_z)<0$, then \eqref{E:center-Lypo} implies that there exists some $M>0$ such that $\norm{v_{\Phi_t(z))}}\leq Me^{\lambda(z,v_z)t/2}$ for any $t>0$ sufficiently large. Therefore, $\omega(z)$ only consists of equilibria, which implies that $\omega(z)\subset \omega_0(x)$. Based on our assumption, we have $\lambda_{k\tilde{z}}>0$ for any $\tilde{z}\in \omega(z)$.  By virtue of Theorem $\ref{omega-lambdak>0}$, we obtain that $z\in \overline{Q}$.

Since $z\in\omega(x)$, we choose a sequence $t_n\to \infty$ such that $\Phi_{t_n}(x)\to z$ as $n\to \infty$. Now we define $$C_x=\{y\in D: y\neq x\,\text{ and }\,y\thicksim x\}.$$
Clearly, $C_x$ is nonempty, since $x\in D$, $D$ is open and $C$ is $k$-solid. Moreover, $C_x$ is $\omega$-compact because $D$ is $\omega$-compact.
So, if there exists some $y\in C_{x}$ with a subsequence of $\{t_n\}_{n=1}^{\infty}$, still denoted by $\{t_n\}_{n=1}^{\infty}$, such that $\Phi_{t_n}(y)\to z$, then by Lemma \ref{co-limit}, one has either $z$ is an equilibrium or $z\in Q$. Recall that $z\notin \omega_0(x)$. So $z\in Q$. Hence, we again obtain $x\in Q$ and we have done.

On the other hand, if for any $y\in C_{x}$, there is a subsequence $t^y_{n_k}\to \infty$ of $\{t_n\}_{n=1}^{\infty}$ such that $\Phi_{t^y_{n_k}}(y)\to z_y\neq z$ as $k\rightarrow \infty$, then it is clear that $z_y\thicksim z$ for any $y\in C_x$. Since $\lambda_{k\tilde{z}}>0$ for any $\tilde{z}\in \omega(z)$, Lemma  \ref{Distance-nonlinear-system} (for $\omega(z)$) implies that there is a $\delta>0$ such that $$\limsup_{t\to +\infty}\norm{\Phi_t(z_y)-\Phi_t(z)}\ge \delta$$ for any $y\in C_x$. As a consequence, for each $y\in C_x$ (hence, for each $z_y$), we can choose without loss of generality a sequence $s^y_n\to \infty$ such that

(P1). $\Phi_{s^y_n}(z_y)\rightarrow z_{z_y}$ and $\Phi_{s^y_{n}}(z)\rightarrow z_z^y$ as $n\rightarrow +\infty$; and

(P2). $z_z^y\thicksim z_{z_y}$ with $\norm{z_z^y-z_{z_y}}\geq \delta$. \vskip 1mm

\noindent Based on this, we {\it further claim that}, for each $y\in C_x$, there is a sequence $\{\tau^y_n\}_{n=1}^{\infty}$ such that $\Phi_{\tau_{n}^y}(x)\rightarrow z_z^{y}\in \omega(x)$ and $\Phi_{\tau_{n}^y}(y)\rightarrow z_{z_y}\in \omega(y)$ as $n\rightarrow +\infty$ satisfying  $z_z^y\thicksim z_{z_y}$ and $\norm{z_z^y-z_{z_y}}\geq \delta$. Before proving this claim, we first show how it implies $x\in \overline{Q}$. Take any a sequence $\{x_k\}_{k=1}^{\infty}\subset C_x$ with $x_k\to x$. For each $x_k$, we utilize the claim to obtain $z_z^{x_k}\in \omega(x)$ and $z_{z_{x_k}}\in \omega(x_k)$ satisfying $z_z^{x_k}\thicksim z_{z_{x_k}}$ and $\norm{z_z^{x_k}-z_{z_{x_k}}}\geq \delta$. Then, due to the $\omega$-compactness of $C_x$, one can repeat the argument in the last two paragraphs of the proof of Theorem $\ref{omega-lambdak>0}$ to obtain that there is a subsequence $\{k_l\}_{l=1}^\infty$ such $x_{k_l}\in Q$ for all $l$ sufficiently large. Consequently, we have $x\in \overline{Q}$, which completes the proof.

Finally, it remains to prove the claim. Due to (P2), it is clear that $z_z^y\thicksim z_{z_y}$ and $\norm{z_z^y-z_{z_y}}\geq \delta$. So, we only need to show the existence of the sequence $\{\tau^y_n\}_{n=1}^{\infty}$. To this end, we observe (P1), that is, $\Phi_{s_{n}^y}(z_y)\rightarrow z_{z_y}$ and $\Phi_{s_{n}^y}(z)\rightarrow z_z^y$ as $n\rightarrow +\infty$. So, one can find a subsequence $\{N_{1}(n)\}_{n=1}^\infty$ of positive integers such that $$\norm{\Phi_{s_{N_1(n)}^y}(z)-z_{z}^y}<\frac{1}{2n}\,\text{ and }\,
\norm{\Phi_{s_{N_1(n)}^y}(z_y)-z_{z_y}}<\frac{1}{2n}$$ for every $n\ge 1$. Recall also that $\Phi_{t^y_{n_k}}(x)\rightarrow z$ and $\Phi_{t^y_{n_k}}(y)\rightarrow z_y$ as $k \rightarrow+\infty$. Then, for each $n$, one can further choose an integer $N_2(n)> 1$ such that $$ \begin{aligned} \norm{\Phi_{s_{N_1(n)}^y}(\Phi_{t_{N_2(n)}^y}(x))-\Phi_{s_{N_1(n)}^y}(z)}<\frac{1}{2n},\\\norm{\Phi_{s_{N_1(n)}^y}(\Phi_{t_{N_2(n)}^y}(y))-\Phi_{s_{N_1(n)}^y}(z_y)}<\frac{1}{2n}\end{aligned}$$ for every $n\ge 1$. Hence, we have $$\begin{aligned}\norm{\Phi_{s_{N_1(n)}^y+t_{N_2(n)}^y}(x)-z_z^y}<\frac{1}{n}
\,\text{ and }\,\norm{\Phi_{s_{N_1(n)}^y+t_{N_2(n)}^y}(y)-z_{z_y}}<\frac{1}{n} \end{aligned}$$ for every $n\ge 1$. Let $\tau^y_{n}=s_{N_1(n)}^y+t_{N_2(n)}^y$ for $n\ge 1$. This establishes the existence of $\{\tau^y_{n}\}$ and the claim verifies.
\end{proof}

\section{Generic behavior and Poincar\'{e}-Bendixson Theorem}

Based on our discussion in the previous sections, we can now describe in this section the {\it generic behaviors} of the flow $\Phi_t$ strongly monotone flow with respect to $k$-cone $C$ (see Theorem \ref{T:Open-dense} or Theorem A), which concludes that generic (open and dense) orbits are
either pseudo-ordered or convergent to equilibria.

In particular, when $k=2$, together with the results obtained in \cite{F-W-W}, we will further show that the {\it generic} orbit of $\Phi_t$ satisfies the Poincar\'{e}-Bendixson Theorem (see Theorem \ref{T:Open-dense-PB} or Theorem B), that is, for generic (open and dense) points the $\omega$-limit set containing no equilibria is a single closed orbit.
This result will be referred as the {\it generic Poincar\'{e}-Bendixson Theorem} for $\Phi_t$.

Before we state our main theorems, we need more notations. We denote $C_E$ as
$$C_E=\{x\in \mathbb{R}^n: \text{the orbit }O(x) \text{ converges to equilibrium}\}.$$
For any $D\subset \RR^n$, we recall that the orbit set of $D$ is defined as $\mathcal{O}(D)=\bigcup_{x\in D}O(x).$

\begin{theorem}\label{T:Open-dense}
Assume that {\rm (FWW)} hold. Let $\mathcal{D}\subset \RR^n$ be an open bounded set such that the orbit set $\mathcal{O}(\mathcal{D})$ of $\mathcal{D}$ is bounded. Then ${\rm Int}(Q\cup C_E)$ {\rm ({\it interior in $\RR^n$})} is dense in $\mathcal{D}$.
\end{theorem}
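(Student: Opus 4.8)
The plan is to derive the theorem from the three trichotomy results of Section~4 (Theorems~\ref{omega-lambdak<=0}, \ref{omega-lambdak>0}, and \ref{regular-lambdak>0}) together with a Baire-category / openness argument. Since $\mathcal{O}(\mathcal{D})$ is bounded, the open set $D:=\mathcal{O}(\mathcal{D})$ (or a bounded open neighbourhood of its closure that remains $\omega$-compact) is $\omega$-compact, so for every $x\in\mathcal{D}$ the $k$-exponential separation of Section~3 is available along $\omega(x)$, and the $k$-Lyapunov exponents $\lambda_{kz}$ are defined on $\omega(x)$. First I would fix an arbitrary $x\in\mathcal{D}$ and split into cases according to the signs of $\lambda_{kz}$ on $\omega(x)$. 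If $\omega(x)$ contains a regular point $z$ with $\lambda_{kz}\le 0$, Theorem~\ref{omega-lambdak<=0} gives $x\in Q$ or $\omega(x)$ is a singleton; in the latter case $\omega(x)=\{z\}$ with $z$ an equilibrium, so $x\in C_E$. If instead $\lambda_{kz}>0$ for \emph{every} $z\in\omega(x)$, Theorem~\ref{omega-lambdak>0} gives $x\in\overline{Q}$. The remaining case — $\lambda_{k\tilde z}>0$ for every regular $\tilde z\in\omega_0(x)$ but some irregular $z\in\omega(x)\setminus\omega_0(x)$ has $\lambda_{kz}\le 0$ — is covered by Theorem~\ref{regular-lambdak>0}, again yielding $x\in\overline{Q}$. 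Since $\omega_0(x)\ne\emptyset$ by the Multiplicative Ergodic Theorem, these cases are exhaustive. Hence every $x\in\mathcal{D}$ lies in $Q\cup C_E\cup\overline{Q}=\overline{Q}\cup C_E$.

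Next I would upgrade this ``$\overline{Q}\cup C_E$ covers $\mathcal{D}$'' statement to density of ${\rm Int}(Q\cup C_E)$. The key point is that $Q$ is open (noted after Definition~\ref{D:two-type-orbit}, using strong monotonicity), so any $x\in Q$ already lies in ${\rm Int}(Q\cup C_E)$, and any $x$ with $x\in\overline{Q}$ is a limit of points of $Q\subset{\rm Int}(Q\cup C_E)$. Thus every point of $\mathcal{D}$ that falls into the second or third case, or into the first case with $x\in Q$, is in $\overline{{\rm Int}(Q\cup C_E)}$. The only points not handled this way are those $x\in\mathcal{D}$ in the first case with $\omega(x)=\{z\}$ a singleton equilibrium and $x\notin Q$, i.e. $x\in C_E\setminus Q$; for these I need to show they too are in $\overline{{\rm Int}(Q\cup C_E)}$. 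Let $\mathcal{V}$ be an open ball around such an $x$ contained in $\mathcal{D}$ (shrinking $\mathcal{V}$ so that $\mathcal{O}(\mathcal{V})$ stays in the bounded set $D$); every $y\in\mathcal{V}$ again satisfies $y\in\overline{Q}\cup C_E$ by the case analysis applied to $y$. If some $y\in\mathcal{V}$ lies in $Q$ (an open set), then $\mathcal{V}\cap Q$ is a nonempty open subset of $Q\cup C_E$ inside $\mathcal{V}$, so $x\in\overline{{\rm Int}(Q\cup C_E)}$. Otherwise $\mathcal{V}\cap Q=\emptyset$, which forces $\mathcal{V}\subset C_E$ (since $\overline{Q}\cup C_E$ covers $\mathcal{V}$ and $\mathcal{V}$ avoids $\overline{Q}$ here — the closure issue is exactly the subtlety, see below); then $\mathcal{V}$ is an open subset of $C_E\subset Q\cup C_E$, so again $x\in{\rm Int}(Q\cup C_E)$. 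In either case $x\in\overline{{\rm Int}(Q\cup C_E)}$, which gives density.

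The main obstacle I anticipate is the gap between ``$x\in\overline{Q}$'' (what Theorems~\ref{omega-lambdak>0} and~\ref{regular-lambdak>0} literally deliver) and membership in the \emph{open} set ${\rm Int}(Q\cup C_E)$: a point in $\overline{Q}\setminus Q$ need not itself be interior to $Q\cup C_E$. The way around this is to argue not pointwise but on open sets. Concretely: suppose ${\rm Int}(Q\cup C_E)$ were not dense in $\mathcal{D}$; then there is a nonempty open $\mathcal{V}\subset\mathcal{D}$ with $\mathcal{V}\cap Q=\emptyset$ and $\mathcal{V}\not\subset C_E$. Pick $x\in\mathcal{V}\setminus C_E$. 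Applying the case analysis to $x$: since $x\notin C_E$ we are in the second or third case (the first case gives $x\in Q\cup C_E$, and $\mathcal{V}\cap Q=\emptyset$ forces $x\in C_E$, a contradiction), hence $x\in\overline{Q}$. But $\overline{Q}\cap\mathcal{V}$ nonempty and $\mathcal{V}$ open would force $Q\cap\mathcal{V}\ne\emptyset$, contradicting $\mathcal{V}\cap Q=\emptyset$. This contradiction establishes density. One technical care-point throughout is to ensure the auxiliary bounded open set $D$ on which the $k$-exponential separation lemmas were stated contains all the orbits invoked (orbits of $x$, of nearby points $x_n$, and of $y\in C_x$); this is exactly why the hypothesis is phrased via boundedness of $\mathcal{O}(\mathcal{D})$ rather than of $\mathcal{D}$ alone, and one simply takes $D$ to be a bounded open neighbourhood of $\overline{\mathcal{O}(\mathcal{D})}$, which is still $\omega$-compact.
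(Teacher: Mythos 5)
Your proposal is correct and follows essentially the same route as the paper: apply the three-case trichotomy of Theorems \ref{omega-lambdak<=0}, \ref{omega-lambdak>0} and \ref{regular-lambdak>0} on the $\omega$-compact set $\mathcal{D}$, identify the singleton case with convergence to an equilibrium, and use the openness of $Q$ to pass from $x\in\overline{Q}$ to the density of ${\rm Int}(Q\cup C_E)$. The only difference is presentational (your contradiction framing versus the paper's direct ``every neighborhood meets ${\rm Int}(Q\cup C_E)$'' argument), and your extra care about the ambient $\omega$-compact set is harmless since $\mathcal{D}$ itself already serves.
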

\begin{proof}
Given any $\bar{x}\in \mathcal{D}$ and any neighborhood $U$ of $\bar{x}$ in $\mathcal{D}$. If $U\subset Q\cup C_E$, then one has $\bar{x}\in {\rm Int}(Q\cup C_E)$. Thus, we are done.
So, we assume that there exists some $x\in U\setminus{(Q\cup C_E)}$. Before going further, we note that $\mathcal{D}$  is $\omega$-compact because $\mathcal{O}(\mathcal{D})$ is bounded. Thus, all the results in Section 4 hold for such $x$. So, one of following three alternatives must occur:

(a) $\lambda_{kz}\le 0$ for some point $z\in \omega_0(x)$;

(b) $\lambda_{kz}>0$ for any $z\in \omega_0(x)$, and there exists a $\tilde{z}\in\omega(x)\setminus \omega_0(x)$ such that $\lambda_{k\tilde{z}}\leq 0$;

(c) $\lambda_{kz}>0$ for any $z\in \omega(x).$

\noindent Here, $\omega_0(x)$ is the set of regular points in $\omega(x)$ defined
in \eqref{E:rg-point}.
Since $x\notin Q\cup C_E$, Theorem \ref{omega-lambdak<=0} directly yields that Case (a) cannot happen. For Case (c), it follows from Theorem  \ref{omega-lambdak>0} that $x\in \bar{Q}$. So, one can choose some $y\in Q$ so close to $x$ that $y\in U$; and moreover, since $Q$ is open, we have $y\in Q={\rm Int}Q\subset {\rm Int}(Q\cup C_E)$. For case (b), we can deduce from Theorem \ref{regular-lambdak>0} that $x\in \bar{Q}$, which directly implies that there exists some $y$ in $U$ satisfying $y\in {\rm Int}(Q\cup C_E)$.

By arbitrariness of $\bar{x}$ and $U$, we have proved that ${\rm Int}(Q\cup C_E)$ is dense in $X$.
\end{proof}

\begin{rem}
{\rm Theorem \ref{T:Open-dense} states that, for smooth flow $\Phi_t$ strongly monotone with respect to $k$-cone $C$, generic (open and dense) orbits are
either pseudo-ordered or convergent to equilibria.
If the rank $k=1$, Theorem \ref{T:Open-dense} automatically implies Hirsch's Generic Convergence Theorem on $\RR^n$ due to the Monotone Convergence Criterion.
}
\end{rem}

\vskip 3mm
Now we state the the {\it generic Poincar\'{e}-Bendixson Theorem} for the $\Phi_t$.
\begin{theorem}\label{T:Open-dense-PB}
Assume that {\rm (FWW)} hold and $k=2$. Let $\mathcal{D}\subset \RR^n$ be an open bounded set such that the orbit set $\mathcal{O}(\mathcal{D})$ of $\mathcal{D}$ is bounded.  Then, for generic (open and dense) points $x\in D$, the $\omega$-limit set $\omega(x)$ containing no equilibria is a single closed orbit.
\end{theorem}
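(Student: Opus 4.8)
The plan is to read off Theorem~\ref{T:Open-dense-PB} from Theorem~\ref{T:Open-dense} (Theorem~A) together with the already-available Poincar\'e--Bendixson description of $\omega$-limit sets of pseudo-ordered orbits for $2$-cones due to Sanchez~\cite{San09} and Feng--Wang--Wu~\cite{F-W-W}. In other words, essentially all the analysis (the $k$-exponential separation and the ergodic trichotomy of Section~4) is already packaged in Theorem~\ref{T:Open-dense}, and what is left is a short bookkeeping step.

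First I would put $\mathcal{G}=\mathrm{Int}(Q\cup C_E)\cap\mathcal{D}$. Since $\mathcal{D}$ is open, $\mathcal{G}$ is open; and since $\mathrm{Int}(Q\cup C_E)$ is dense in $\mathcal{D}$ by Theorem~\ref{T:Open-dense}, $\mathcal{G}$ is dense in $\mathcal{D}$. Note also $\mathcal{G}\subseteq Q\cup C_E$. I claim this $\mathcal{G}$ witnesses the conclusion: for every $x\in\mathcal{G}$ with $\omega(x)\cap E=\emptyset$, the set $\omega(x)$ is a single closed orbit. Fix such an $x$; since $\mathcal{O}(\mathcal{D})$ is bounded, $O(x)$ is precompact, so $\omega(x)$ is nonempty, compact, connected and invariant.

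Next I would eliminate the degenerate branch. If $x\in C_E$, then $O(x)$ converges to an equilibrium, whence $\omega(x)$ is a singleton contained in $E$, contradicting $\omega(x)\cap E=\emptyset$; so $x\notin C_E$, and therefore $x\in Q$, i.e.\ $O(x)$ is a nontrivial pseudo-ordered orbit (Definition~\ref{D:two-type-orbit}). At this point I would invoke the structure theory for $k=2$: by the trichotomy of~\cite{F-W-W}, $\omega(x)$ is ordered with respect to $C$, or $\omega(x)\subset E$, or $\omega(x)$ carries an ordered homoclinic structure based at an equilibrium; the last two force $\omega(x)\cap E\neq\emptyset$ and are excluded by hypothesis, so $\omega(x)$ is ordered, and then --- exactly as in the Poincar\'e--Bendixson statement of Sanchez~\cite{San09} (valid here because (FWW) provides a $C^{1,\alpha}$, hence $C^1$, flow) and of~\cite{F-W-W} --- an ordered, compact, connected invariant set with no equilibria is a single closed orbit. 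Thus $\omega(x)$ is a single closed orbit, as required.

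I expect the real obstacle to be entirely upstream of this argument: it is Theorem~\ref{T:Open-dense} itself (the Section~4 work built on the $k$-exponential separation and the $k$-Lyapunov exponent) and the $k=2$ structural results of~\cite{San09,F-W-W}. Inside the present step the only point demanding genuine care is the interface: checking that the single hypothesis ``$\omega(x)$ contains no equilibria'' simultaneously rules out $x\in C_E$ and the ordered homoclinic alternative, thereby isolating precisely the ordered case to which Poincar\'e--Bendixson applies.
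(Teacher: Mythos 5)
Your proposal is correct and follows the same route as the paper: take $\mathcal{G}=\mathrm{Int}(Q\cup C_E)$, which is open and dense by Theorem~\ref{T:Open-dense}, observe that $\omega(x)\cap E=\emptyset$ forces $x\in Q$, and then invoke the Poincar\'e--Bendixson result for pseudo-ordered orbits with $k=2$ from \cite{F-W-W} (Theorem 5.1 there). Your extra discussion of the trichotomy and the exclusion of the homoclinic alternative is just a more explicit unpacking of that citation.
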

\begin{proof}
 Let $\mathcal{G}={\rm Int}(Q\cup C_E)\subset X$. By Theorem \ref{T:Open-dense}, $\mathcal{G}$ is open and dense in $X$. Now, given any $x\in \mathcal{G}$, if $\omega(x)\cap E=\emptyset$ then one has $x\in Q$. Consequently, it follows from \cite[Theorem 5.1]{F-W-W} that $\omega(x)$ consists of a periodic point, which completes the proof.
\end{proof}

\section{Applications to high-dimensional systems}
In this section, we demonstrate our general results by establishing the so-called  {\it generic Poincar\'{e}-Bendixson Theorem} for high-dimensional autonomous systems of ordinary differential equations. This means that for a {\it generic (open and dense)} initial point in the phase space, the omega-limit set containing no equilibria must be a single closed orbit.

\vskip 3mm
We consider a general autonomous system of ODEs
\begin{equation}\label{E:ODE-sys}
\dot{x}=F(x),\quad x\in \mathbb{R}^n,
\end{equation}
in which $F$ is a $C^{1,\alpha}$-smooth vector field defined in $\RR^n$. We denote by $\Phi_t(x)$ the flow generated by \eqref{E:ODE-sys}. System \eqref{E:ODE-sys} is called dissipative if there is an open bounded set $\mathcal{B}\subset\RR^n$ such that for each $x\in \RR^n$ there is a $t_0>0$ such that $\Phi_t(x)\in \mathcal{B}$ for all $t\ge t_0$.

Let $C\subset \mathbb{R}^n$ be a $2$-solid cone which is complemented. System \eqref{E:ODE-sys} is said to be
{\it $C$-cooperative} if the fundamental solution matrix $U^{pq}(t)$ of the linear system
$$\dot{U}=A^{pq}(t)U, \quad U(0)=I$$ satisfies the cone invariance condition
\begin{equation}\label{E:cone-condition-ODE}
U^{pq}(t)(C\setminus \{0\})\subset {\rm Int}C,\,\,\,\text{ for all }t>0.
\end{equation}
Here, the associated matrix $A^{pq}(t)$ with $p,q\in \RR^n$ is
$$A^{pq}(t)=\int_0^1DF(s\Phi_t(p)+(1-s)\Phi_t(q))ds.$$
\vskip 2mm

Now, we give the following Poincar\'{e}-Bendixson Theorem for system \eqref{E:ODE-sys}.

\begin{theorem}\label{T:ODE-poincare}
Assume that system \eqref{E:ODE-sys} is $C^{1,\alpha}$-smooth, dissipative and $C$-cooperative. Then there is an open and dense subset $\mathcal{D}\subset \RR^n$ such that for any $x\in \mathcal{D}$, if the $\omega$-limit set $\omega(x)$ contains no equilibrium then $\omega(x)$ is a periodic orbit.
\end{theorem}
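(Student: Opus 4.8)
The plan is to deduce Theorem~\ref{T:ODE-poincare} from the generic Poincar\'{e}--Bendixson Theorem (Theorem~\ref{T:Open-dense-PB}): first I would verify that the flow $\Phi_t$ generated by \eqref{E:ODE-sys} satisfies assumption \textbf{(FWW)} for the complemented, $2$-solid (hence rank-$2$) cone $C$, and then use dissipativity to exhaust $\RR^n$ by bounded open sets on which Theorem~\ref{T:Open-dense-PB} applies.

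For \textbf{(FWW)}, the joint $C^{1,\alpha}$-regularity of $(t,x)\mapsto\Phi_t(x)$ is the standard regularity of the flow of a $C^{1,\alpha}$ vector field: $D_x\Phi_t$ solves the variational equation, whose coefficient $DF(\Phi_t(\cdot))$ is locally $\alpha$-H\"{o}lder in $x$, and a Gronwall estimate transfers this bound to $D_x\Phi_t$. The cone-invariance of the derivative follows by observing that $A^{xx}(t)=DF(\Phi_t(x))$, so $U^{xx}(t)$ is precisely the fundamental matrix of the variational equation along $O(x)$, i.e.\ $D_x\Phi_t=U^{xx}(t)$; then \eqref{E:cone-condition-ODE} with $p=q=x$ gives $D_x\Phi_t(C\setminus\{0\})\subset\text{Int}\,C$ for $t>0$. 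For strong monotonicity, take $x\thicksim y$ with $x\ne y$ and set $w(t)=\Phi_t(x)-\Phi_t(y)$; the integral mean value theorem gives $\dot w(t)=A^{xy}(t)\,w(t)$, hence $w(t)=U^{xy}(t)(x-y)$, and since $x-y\in C\setminus\{0\}$, condition \eqref{E:cone-condition-ODE} yields $w(t)\in\text{Int}\,C$, that is $\Phi_t(x)\thickapprox\Phi_t(y)$, for all $t>0$ (while $w(0)=x-y\in C$). Thus $\Phi_t$ is strongly monotone with respect to the $2$-cone $C$, so \textbf{(FWW)} holds and Theorem~\ref{T:Open-dense-PB} is available.

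To pass from \textbf{(FWW)} to the open dense set, recall that dissipativity in the finite-dimensional space $\RR^n$ implies that the forward orbit of every bounded set is bounded (classical dissipative-systems theory). Hence for each integer $R\ge 1$ the ball $B_R=\{x\in\RR^n:\norm{x}<R\}$ has bounded orbit set $\mathcal{O}(B_R)$, so Theorem~\ref{T:Open-dense-PB} (with $\mathcal{D}=B_R$ and $k=2$) produces an open dense set $\mathcal{G}_R\subset B_R$ such that, for $x\in\mathcal{G}_R$, $\omega(x)$ containing no equilibrium is a single closed orbit, i.e.\ a periodic orbit. I would then take $\mathcal{D}=\bigcup_{R\ge 1}\mathcal{G}_R$: it is open, and it is dense, since for every nonempty open $V\subset\RR^n$ one can pick $R$ with $V\cap B_R$ a nonempty open subset of $B_R$, which therefore meets the dense set $\mathcal{G}_R$. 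Any $x\in\mathcal{D}$ lies in some $\mathcal{G}_R$, which gives the assertion.

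The main obstacle is the derivation of strong monotonicity from $C$-cooperativity: one must recognize that the difference of two solutions obeys the linear nonautonomous system with matrix $A^{xy}(t)$, whose fundamental matrix is exactly the $U^{xy}(t)$ appearing in \eqref{E:cone-condition-ODE}, so that the cone-invariance hypothesis applies verbatim; the remaining ingredients (the $C^{1,\alpha}$-regularity of the flow, and the classical fact that point-dissipativity in finite dimensions forces forward orbits of bounded sets to be bounded) are standard and should simply be recorded carefully.
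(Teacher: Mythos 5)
Your proposal is correct and follows essentially the same route as the paper: verify \textbf{(FWW)} for the flow of \eqref{E:ODE-sys}, then exhaust $\RR^n$ by open balls with bounded orbit sets (using dissipativity) and take the union of the open dense sets supplied by Theorem~\ref{T:Open-dense-PB}. The only difference is that the paper delegates the verification of \textbf{(FWW)} to Sanchez's Proposition~1 in \cite{San09}, whereas you carry out the argument (identifying $D_x\Phi_t=U^{xx}(t)$ and $\Phi_t(x)-\Phi_t(y)=U^{xy}(t)(x-y)$) explicitly, which is a correct unwinding of that citation.
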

\begin{proof}
Together with \cite[Proposition 1]{San09} by Sanchez, it is clear that if system \eqref{E:ODE-sys} is $C^{1,\alpha}$-smooth and $C$-cooperative, then the flow $\Phi_t(x)$ satisfies the assumption (FWW).
Take any integer $i\ge 1$ and let $B_i$ be the open ball centered at the origin with radius $i$.
Since system \eqref{E:ODE-sys} is dissipative, the orbit set $\mathcal{O}(B_i)$ of $B_i$ is bounded. By Theorem \ref{T:Open-dense-PB}, there is an open and dense set $D_i$ in $B_i$ from which the omega-limit set containing no equilibria is a closed orbit. Note that $\RR^n=\cup_{i\ge 1}B_i$. Then $\mathcal{D}:=\cup_{i\ge 1}D_i$ is an open and sense subset in $\RR^n$. Moreover, for any $x\in \mathcal{D}$, if the $\omega$-limit set $\omega(x)$ contains no equilibrium then $\omega(x)$ is a periodic orbit. We have completed the proof.
\end{proof}

\begin{rem}
{\rm Theorem \ref{T:ODE-poincare} will be referred to as the {\it generic Poincar\'{e}-Bendixson Theorem} for high-dimensional ODE systems. Based on this Theorem, we improve that the Poincar\'{e}-Bendixson type conclusion in \cite{San09,F-W-W} is  satisfied for generic orbits, instead of just certain (i.e., pseudo-ordered) orbits.
}
\end{rem}

\vskip 2mm

Finally, we specify a quadratic cone which is $2$-solid and complemented.
Let $P$ be a constant real symmetric non-singular matrix $n\times n$ matrix, with $2$ negative eigenvalues and $(n-2)$ positive eigenvalues. Then the set
\begin{equation*}\label{E:2-cone}
C^-(P)=\{x\in \mathbb{R}^n:x^*Px\le 0\}
\end{equation*} is a $2$-solid cone which is also complemented. Here $x^*$ denote the transpose of the vector $x\in \mathbb{R}^n$.

Assume that there exists a continuous function $\lambda:\mathbb{R}^n\to \mathbb{R}$ (not necessarily positive) such that the matrices \begin{equation}\label{decay-11-smooth-OSan1}
PDF(x)+(DF(x))^*P+\lambda(x) P<0,\,\, \text{ for any }x\in \mathbb{R}^n,
\end{equation}
where $DF(x)^*$ stands for the transpose of the Jacobian $DF(x)$ and $``<"$ represents the usual order in the space of symmetric matrices (i.e., the matrices are negative definite). The following lemma is due to Sanchez \cite{San09,San10}.
\begin{lemma}\label{L:San}
Assume that \eqref{decay-11-smooth-OSan1} holds. Then system \eqref{E:ODE-sys} is $C^-(P)$-cooperative.
\end{lemma}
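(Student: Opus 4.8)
The final statement to prove is Lemma \ref{L:San}: under the matrix inequality \eqref{decay-11-smooth-OSan1}, system \eqref{E:ODE-sys} is $C^-(P)$-cooperative, i.e., the fundamental matrix $U^{pq}(t)$ of $\dot U = A^{pq}(t) U$ maps $C^-(P)\setminus\{0\}$ into $\mathrm{Int}\,C^-(P)$ for $t>0$.

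The plan is to work with the quadratic form $V(v) = v^*Pv$ along solutions $v(t) = U^{pq}(t)v_0$ of the linear variational system. First I would observe that $A^{pq}(t) = \int_0^1 DF(s\Phi_t(p)+(1-s)\Phi_t(q))\,ds$, so that by the hypothesis \eqref{decay-11-smooth-OSan1} applied pointwise and integrated in $s$, there is a continuous scalar function $\mu(t) := \int_0^1 \lambda(s\Phi_t(p)+(1-s)\Phi_t(q))\,ds$ with
\begin{equation*}
P A^{pq}(t) + A^{pq}(t)^*P + \mu(t) P < 0 \quad\text{for all } t.
\end{equation*}
Then I would compute $\frac{d}{dt} V(v(t)) = v(t)^*\big(P A^{pq}(t) + A^{pq}(t)^*P\big)v(t)$ and combine with the inequality above to get $\frac{d}{dt}V(v(t)) < -\mu(t)\,V(v(t))$ whenever $v(t)\neq 0$ (the quadratic form on the left of the matrix inequality is strictly negative on nonzero vectors because the matrix is negative definite). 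Setting $g(t) = V(v(t))\exp\!\big(\int_0^t \mu(s)\,ds\big)$, this says $g'(t) < 0$ strictly as long as $v(t)\neq 0$; note $v(t)\neq0$ for all $t$ since $U^{pq}(t)$ is invertible and $v_0\neq 0$.

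From here the conclusion follows quickly. If $v_0 \in C^-(P)\setminus\{0\}$, then $g(0) = V(v_0) \le 0$, and since $g$ is strictly decreasing we get $g(t) < g(0) \le 0$ for every $t>0$, hence $V(v(t)) < 0$, i.e., $v(t)\in \{x : x^*Px < 0\}$. The last step is to identify this strict sublevel set with $\mathrm{Int}\,C^-(P)$: since $P$ has exactly two negative eigenvalues and $n-2$ positive ones, $x\mapsto x^*Px$ is a submersion away from $0$, so $\{x^*Px<0\}$ is open and is precisely the topological interior of $C^-(P)=\{x^*Px\le 0\}$ (every boundary point satisfies $x^*Px=0$ and is a limit of points with $x^*Px>0$). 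Therefore $U^{pq}(t)(C^-(P)\setminus\{0\})\subset \mathrm{Int}\,C^-(P)$ for $t>0$, which is exactly \eqref{E:cone-condition-ODE} with $C=C^-(P)$, so the system is $C^-(P)$-cooperative.

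The computation is essentially routine; the only point requiring a little care is the strictness of the Lyapunov-type inequality — one must use that \eqref{decay-11-smooth-OSan1} gives a \emph{negative definite} matrix (not merely negative semidefinite), so that the derivative of $g$ is \emph{strictly} negative and the initial boundary value $V(v_0)=0$ is pushed strictly inside in arbitrarily small time. The other mild subtlety is the integration in $s$: one needs $\lambda$ continuous so that $\mu(t)$ is a well-defined continuous function and $\int_0^t\mu$ makes sense, and one needs the matrix inequality \eqref{decay-11-smooth-OSan1} to be preserved under the convex average $\int_0^1(\cdot)\,ds$, which holds because the cone of negative definite matrices is convex. Since the paper attributes this lemma to Sanchez \cite{San09,San10}, I would simply cite those references for the detailed verification and present the Lyapunov-function argument above as the essential idea.
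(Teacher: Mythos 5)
Your argument is correct, and it is exactly the standard quadratic--Lyapunov computation underlying the cited result: the paper itself gives no proof here, deferring entirely to Sanchez \cite[Proposition 7]{San09}, so your write-up simply supplies the details that the paper (and Sanchez) rely on. The two points you flag as needing care — strict negative definiteness surviving the convex average in $s$, and the identification $\mathrm{Int}\,C^-(P)=\{x:x^*Px<0\}$ (which uses that $P$ is nonsingular with eigenvalues of both signs) — are handled correctly.
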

\begin{proof}
See Sanchez \cite[Proposition 7]{San09} and his afterwards discussion in \cite{San09}.
\end{proof}

Combing with Theorem \ref{T:ODE-poincare} and Lemma \ref{L:San}, we obtain the following generic Poincar\'{e}-Bendixson Theorem for high-dimensional system \eqref{E:ODE-sys} with a quadratic cone:
\begin{cor}\label{C:quadratic-poincare}
Assume that system \eqref{E:ODE-sys} is $C^{1,\alpha}$-smooth, dissipative and satisfies \eqref{decay-11-smooth-OSan1}. Then the conclusion of Theorem \ref{T:ODE-poincare} holds.
\end{cor}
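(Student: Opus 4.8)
The plan is to deduce the statement directly from Theorem \ref{T:ODE-poincare} by exhibiting an explicit $2$-solid complemented cone with respect to which \eqref{E:ODE-sys} is cooperative. First I would check the purely linear-algebraic properties of the quadratic cone $C^-(P)=\{x\in\RR^n:x^*Px\le 0\}$. Since $P$ is real symmetric and non-singular with exactly $2$ negative and $(n-2)$ positive eigenvalues, an orthogonal change of coordinates $y=Ox$ turns $C^-(P)$ into $\{y:-\mu_1y_1^2-\mu_2y_2^2+\sum_{j\ge 3}\mu_jy_j^2\le 0\}$ with all $\mu_i>0$. The coordinate plane $\{y_3=\dots=y_n=0\}$ lies, apart from the origin, in the set where the form is strictly negative, hence in $\tint C^-(P)$, so $C^-(P)$ is $2$-solid; on the other hand any linear subspace contained in $C^-(P)$ has dimension at most $2$ because the negative index of $P$ equals $2$, so $C^-(P)$ is genuinely a $2$-cone; and the $(n-2)$-dimensional space $\{y_1=y_2=0\}$ meets $C^-(P)$ only at $0$, so $C^-(P)$ is complemented.

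Next I would invoke Lemma \ref{L:San}: the hypothesis \eqref{decay-11-smooth-OSan1} guarantees that \eqref{E:ODE-sys} is $C^-(P)$-cooperative, i.e.\ the fundamental matrix $U^{pq}(t)$ of $\dot U=A^{pq}(t)U$, $U(0)=I$, satisfies the strict cone-invariance condition \eqref{E:cone-condition-ODE} with $C=C^-(P)$. Combining this with the standing assumptions that \eqref{E:ODE-sys} is $C^{1,\alpha}$-smooth and dissipative, every hypothesis of Theorem \ref{T:ODE-poincare} is met with the cone $C=C^-(P)$.

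Finally, applying Theorem \ref{T:ODE-poincare} yields an open and dense subset $\mathcal{D}\subset\RR^n$ such that for every $x\in\mathcal{D}$ whose $\omega$-limit set contains no equilibrium, $\omega(x)$ is a single periodic orbit; this is exactly the conclusion of Theorem \ref{T:ODE-poincare} that the corollary asserts. I do not expect a substantive obstacle here, since no new dynamical input is required: the corollary is essentially the composition of Lemma \ref{L:San} (Sanchez's matrix computation, already cited) with Theorem \ref{T:ODE-poincare}. The only steps needing care are the routine verifications above that $C^-(P)$ is a $2$-solid complemented cone, so that the cone-theoretic framework of Sections 2--5 and in particular assumption \textbf{(FWW)} (obtained via \cite[Proposition 1]{San09}) genuinely applies.
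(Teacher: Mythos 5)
Your proposal is correct and follows essentially the same route as the paper: the corollary is obtained by combining Lemma \ref{L:San} (which converts hypothesis \eqref{decay-11-smooth-OSan1} into $C^-(P)$-cooperativity) with Theorem \ref{T:ODE-poincare}, the only additional content being the verification that $C^-(P)$ is a $2$-solid complemented cone, which the paper asserts without proof and which your diagonalization argument handles correctly.
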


\begin{rem}
\textnormal{In \cite{SmithR-2,SmithR-3}, R.A. Smith succeeded in establishing a Poincar\'{e}-Bendixson theorem for the high-dimensional system \eqref{E:ODE-sys}
by assuming that $F$ satisfies
\begin{equation}\label{decay-12-smith}
(x-y)^*\cdot P\cdot [F(x)-F(y)+\lambda (x-y)]\le -\epsilon\abs{x-y}^2
\end{equation}
for any $x,y\in \RR^n$, where $\lambda,\epsilon>0$ are positive constants and $\abs{x-y}$ denote the Euclidean norm of the vector $x-y$. It states that {\it any omega-limit set} containing no equilibria must be a single closed orbit.
If $F$ is of class $C^1$, by following the same arguments in Ortega and Sanchez \cite[Remarks 1-2]{OSan}, one may obtain that
\eqref{decay-12-smith} holds if and only if
\begin{equation}\label{decay-13-smooth}
PDF(x)+(DF(x))^*P+\lambda P\le -\epsilon I,\,\, \text{ for any }x\in \RR^n.
\end{equation}
In his proof \cite{SmithR-2,SmithR-3}, the matrix $P$ is employed to produce a quadratic Lyapunov function that helps to establish his Poincar\'{e}-Bendixson theorem.}

\textnormal{
Compared to \eqref{decay-11-smooth-OSan1}, the assumption \eqref{decay-13-smooth} in R. A. Smith's work requires the matrices be negative definite in a uniform sense with respect to $x$, and $\lambda$ is a positive constant. Therefore, under the weaker assumption \eqref{decay-11-smooth-OSan1}, the Lyapunov-function approach in \cite{SmithR-2,SmithR-3} does not work any more. However, our Corollary \ref{C:quadratic-poincare} concludes that, {\it generically, the Poincar\'{e}-Bendixson theorem still holds}.
}
\end{rem}

\end{document}